\newtheorem{theorem}{Theorem}[section]
\newtheorem{lemma}[theorem]{Lemma}
\newtheorem{corollary}[theorem]{Corollary}
\newtheorem{proposition}[theorem]{Proposition}
\theoremstyle{definition}
\newtheorem{definition}[theorem]{Definition}
\newtheorem{example}[theorem]{Example}
\newtheorem{notation}[theorem]{Notation}
\newtheorem{remark}[theorem]{Remark}
\numberwithin{equation}{section}
\newcommand{\cA}{\mathcal{A}}
\DeclareMathOperator{\Aut}{Aut}
\DeclareMathOperator{\ch}{ch}
\DeclareMathOperator{\Bl}{Bl}
\newcommand{\Cstar}{\CC^\times}
\newcommand{\CC}{\mathbb{C}}
\DeclareMathOperator{\Cone}{Cone}
\newcommand{\Dstack}[2]{D^b_{#2}\!\left(#1\right)}
\newcommand{\Fix}{\text{\rm Fix}}
\DeclareMathOperator{\Hom}{Hom}
\newcommand{\tti}{\mathtt{i}}
\newcommand{\LL}{\mathbb{L}}
\DeclareMathOperator{\Lie}{Lie}
\newcommand{\cO}{\mathcal{O}}
\newcommand{\PP}{\mathbb{P}}
\newcommand{\QQ}{\mathbb{Q}}
\newcommand{\RR}{\mathbb{R}}
\newcommand{\FM}{\mathbb{FM}}
\newcommand{\ZZ}{\mathbb{Z}}
\newcommand{\hS}{\widehat{S}}
\newcommand{\tD}{\widetilde{D}} 
\newcommand{\tomega}{{\tilde{\omega}}} 
\newcommand{\tX}{\widetilde{X}}
\newcommand{\Frac}{\operatorname{Frac}}
\newcommand{\Spec}{\operatorname{Spec}}
\newcommand{\tch}{\operatorname{\widetilde{\ch}}}
\newcommand{\rank}{\operatorname{rank}} 
\newcommand{\Td}{\operatorname{Td}} 
\newcommand{\tTd}{\widetilde{\Td}}
\newcommand{\Tr}{\operatorname{Tr}}
\newcommand{\cF}{\mathcal{F}} 
\newcommand{\cE}{\mathcal{E}}
\newcommand{\sfB}{\mathsf{B}}
\begin{document}

\title[$K$-Theoretic and Categorical Properties of Toric Deligne--Mumford Stacks]
{$K$-Theoretic and Categorical Properties of\\ Toric Deligne--Mumford Stacks}

\author[Coates]{Tom Coates}
\address{Department of Mathematics\\
Imperial College London\\
180 Queen's Gate\\
London SW7 2AZ 
\\UK}
\email{t.coates@imperial.ac.uk}
\email{edward.segal04@imperial.ac.uk}

\author[Iritani]{Hiroshi Iritani}
\address{Department of Mathematics\\
Graduate School of Science\\
Kyoto University\\
Oiwake-cho\\
Kitashirakawa\\
Sakyo-ku\\
Kyoto, 606-8502\\
Japan}
\email{iritani@math.kyoto-u.ac.jp}

\author[Jiang]{Yunfeng Jiang}
\address{Department of Mathematics\\
University of Kansas\\
1460 Jayhawk Boulevard\\
Lawrence, Kansas 66045-7594\\
USA}
\email{y.jiang@ku.edu}

\author[Segal]{Ed Segal}

\thanks{First published in Pure and Applied Mathematics Quarterly in 
Volume 11, Number 2, 239--266 (2015), 
published by International Press.}

\subjclass[2010]{14A20 (Primary); 19L47, 14F05 (Secondary)}

\keywords{Toric Deligne--Mumford stacks, orbifolds, $K$-theory, localization, derived category of coherent sheaves, Fourier--Mukai transformation, flop, $K$-equivalence, equivariant, variation of GIT quotient}

\date{}

\begin{abstract}
We prove the following results for toric Deligne--Mumford stacks, under minimal compactness hypotheses: the Localization Theorem in equivariant $K$-theory; the equivariant Hirzebruch--Riemann--Roch theorem; the Fourier--Mukai transformation associated to a crepant toric wall-crossing gives an equivariant derived equivalence.
\end{abstract}

\maketitle

\section{Introduction}

We establish various basic geometric properties of toric Deligne--Mumford stacks under minimal compactness hypotheses.  
This is a companion paper to~\cite{Coates--Iritani--Jiang}: the results here are used there in the proof of 
the Crepant Transformation Conjecture for  toric Deligne--Mumford stacks, and we expect that they will also be useful elsewhere.   We establish the equivariant Hirzebruch--Riemann--Roch theorem and the Localization Theorem in equivariant $K$-theory, two of the fundamental tools in equivariant topology, for toric Deligne--Mumford stacks without requiring compactness.  We also give an equivariant generalization of a celebrated result of Kawamata, that $K$-equivalent toric Deligne--Mumford stacks are derived equivalent, and exhibit an explicit Fourier--Mukai kernel that implements this equivalence.  This latter result plays an essential role in the proof of the Crepant Transformation Conjecture~\cite{Coates--Iritani--Jiang}: it implies that the transformation which controls the change in quantum cohomology under a crepant transformation (between toric Deligne--Mumford stacks or complete intersections therein) is, in an appropriate sense, a linear symplectic isomorphism. None of the results proved here are surprising, but we were unable to find proofs of them, at this level of generality, in the  literature.  
Note in particular that our formulation (equation~\ref{eq:equiv_HRR} below) of the equivariant Hirzebruch--Riemann--Roch theorem makes sense for arbitrary (not just toric) smooth Deligne--Mumford stacks with torus action under mild hypotheses; we believe this formulation to be new. 


We consider toric Deligne--Mumford stacks $X$ such that:
\begin{enumerate}
\item the torus-fixed set $X^T$ is non-empty; and
\item the coarse moduli space $|X|$ is semi-projective, i.e.~$|X|$ is projective over the affinization $\Spec(H^0(|X|,\cO))$.
\end{enumerate}
These conditions are equivalent to demanding that $X$ arise as the GIT quotient $\big[\CC^m /\!\!/_\omega K\big]$ of a vector space by the linear action of a complex torus $K$, as in \S\ref{sec:GIT} below.  The action of $T = (\Cstar)^m$ on $\CC^m$ descends to give an ineffective action of $T$ on $X$, as well as an effective action of the quotient torus $T/K$ on $X$. 

The Localization Theorem in equivariant $K$-theory and the equivariant index theorem were first proved for the topological $K$-theory of $G$-spaces and $G$-manifolds by Atiyah and Segal~\cite{Segal,Atiyah--Segal}.  Similar results were established in algebraic $K$-theory by Nielsen~\cite{Nielsen} and Thomason~\cite{Thomason:1,Thomason:2,Thomason:3}.  Index theorems have been proven for compact orbifolds by Kawasaki~\cite{Kawasaki} and for proper Deligne--Mumford stacks by Toen~\cite{Toen}; an equivariant index theorem for compact orbifolds was proved by Vergne~\cite{Vergne}.  In \S\S\ref{sec:HRR}--\ref{sec:localization} we prove an equivariant index theorem for toric Deligne--Mumford stacks, without requiring properness, using methods and results of Atiyah--Segal and Thomason.  

In \S\ref{sec:derived_equivalence} we prove that the Fourier--Mukai functor associated to the $K$-equivalence
\[
\xymatrix{
  & \widetilde{X} \ar[ld]_{f_+} \ar[rd]^{f_-} \\
  X_+ \ar@{-->}[rr]^{\varphi} && X_- 
}
\]
determined by a crepant wall-crossing of toric GIT quotients gives an equivalence between the equivariant derived categories of $X_\pm$.
This is an equivariant generalization of a result of Kawamata \cite{Kawamata}, with a different proof: we use the theory developed by Halpern-Leistner~\cite{Halpern-Leistner} and Ballard--Favero--Katzarkov~\cite{Ballard--Favero--Katzarkov} which relates derived categories to variation of GIT.

Toric Deligne--Mumford stacks were introduced by Borisov--Chen--Smith~\cite{Borisov--Chen--Smith}, who described them in terms of \emph{stacky fans}.  They have also been studied by Jiang~\cite{Jiang}, who introduced the notion of an \emph{extended stacky fan}.  Our approach here, where we treat toric Deligne--Mumford stacks as GIT quotients $\big[\CC^m /\!\!/_\omega K\big]$, is equivalent to the approach via (extended) stacky fans.  This is explained in~\cite[\S4.2]{Coates--Iritani--Jiang}.

\section{The Hirzebruch--Riemann--Roch Formula} 
\label{sec:HRR}

Let $X$ be a smooth Deligne-Mumford stack with a torus $T$ action, which satisfies  the following properties:
\begin{itemize} 
\item[(P1)] the coarse moduli space $|X|$ is semi-projective; \label{property:1}
\item[(P2)] all the $T$-weights appearing in the $T$-representation $H^0(X,\cO)$ are contained in a strictly convex cone in $\Lie(T)^*$, and the $T$-invariant subspace $H^0(X,\cO)^T$ is $\CC$. \label{property:2}
\end{itemize}
\noindent These properties together imply, for example, that the fixed set $X^T$ is compact.  
As we will see, these properties allow us to define the equivariant index of coherent sheaves on $X$, and to state the equivariant Hirzebruch--Riemann--Roch formula (equation~\ref{eq:equiv_HRR} below).  In~\S\ref{sec:localization} below we prove this Hirzebruch--Riemann--Roch formula for toric Deligne--Mumford stacks, using the Localization Theorem in equivariant $K$-theory.

Let $|X|$ denote the coarse moduli space of $X$ and let $IX=X\times_{|X|}X$ denote the inertia stack of $X$. 
The stack $IX$ consists of pairs $(x,g)$ with $x\in X$ 
and $g\in \Aut_X(x)$.  We write $H^{\bullet \bullet}_{T}(IX) := \prod_{p} H^{2p}_{T}(IX)$, where the superscript `${}^{\bullet \bullet}$' is to indicate that we take the direct product of cohomology groups rather than the direct sum; it does not indicate a double grading.  Let $K_T^0(X)$ denote the Grothendieck group of $T$-equivariant vector bundles on $X$.  We introduce an orbifold Chern character map
\[
\tch \colon K^0_T(X) \to H_{T}^{\bullet\bullet}(IX)
\]
as follows.  Let $IX = \bigsqcup_{v \in \sfB} X_v$ be the decomposition of the inertia stack $IX$ into connected components, where the set $\sfB$ indexes the connected components.  (If $X$ is a toric Deligne--Mumford stack then we can take $\sfB$ to be the box of the stacky fan that defines $X$.)
Let $q_v \colon X_v \to X$ be the natural map, and let $E$ be a $T$-equivariant vector bundle on $X$.  The stabilizer $g_v$ along $X_v$ acts on the vector bundle $q_v^* E \to X_v$, giving an eigenbundle decomposition
\[
q_v^* E = \bigoplus_{\theta \in \Theta(v)} E_{v,\theta}
\]
where $\Theta(v)$ denotes the set of rational numbers $\theta \in [0,1)$ such that $\exp(2\pi\tti \theta)$ is an eigenvalue of the $g_v$-action on $q_v^*(E)$, and $E_{v,\theta}$ is the eigenbundle corresponding to $\theta$.  The equivariant Chern character is defined to be
\[
\tch(E) = \bigoplus_{v\in \sfB} \sum_{\theta \in \Theta(v)} e^{2\pi\tti \theta} 
\ch^T(E_{v,\theta}) 
\]
where $\ch^T(E_{v,\theta})\in H^{\bullet\bullet}_T(X_v)$ is the usual $T$-equivariant Chern character of the $T$-equivariant vector bundle $E_{v,\theta} \to X_v$.   
Let $\delta_{v,\theta,i}$, $1 \leq i \leq \rank(E_{v,\theta})$ be the $T$-equivariant Chern roots of $E_{v,\theta}$, so that $c^T(E_{v,\theta}) = \prod_i (1+ \delta_{v,\theta,i})$. 
These Chern roots are not actual cohomology classes, but  symmetric polynomials in the Chern roots make sense as equivariant cohomology classes on $X_v$.  The $T$-equivariant orbifold Todd class $\tTd(E) \in H^{\bullet\bullet}_{T}(IX)$ is defined to be: 
\[
\tTd(E) = \bigoplus_{v\in \sfB} 
\left( \prod_{\substack{\theta \in \Theta(v)\\ \theta \ne 0}} \prod_{i=1}^{\rank(E_{v,\theta})} 
\frac{1}{1- e^{-2\pi\tti \theta} e^{-\delta_{v,\theta,i}}}\right) 
\prod_{i=1}^{\rank E_{v,0}} 
\frac{\delta_{v,0,i}}{1-e^{-\delta_{v,0,i}}}
\]
We write $\tTd_X = \tTd(TX)$ for the orbifold Todd class of the tangent bundle.

Property~(P2) gives that all of the $T$-weights of $H^0(X,\cO)$ lie in a strictly convex cone in $ \Lie(T)^*$.  After changing the identification of $T$ with $(\Cstar)^m$ if necessary, we may assume that this cone is contained within the cone spanned by the standard characters $\lambda_1,\ldots,\lambda_m$ in $H^2_T({\rm pt}) = \Lie(T)^*$, where $\lambda_j \colon T \to \Cstar$ is given by projection to the $j$th factor of $T = (\Cstar)^m$.  The Chern character 
\[
\ch^T \colon K_T^0({\rm pt}) \to 
\ZZ[e^{\pm\lambda}] := 
\ZZ[e^{\pm \lambda_1},\dots,e^{\pm \lambda_m}]
\subset H^{\bullet\bullet}_T({\rm pt})
\]
sends the irreducible representation of weight $\lambda_i$ 
to $e^{\lambda_i}$. 
The $T$-representation $H^0(X,\cO)$ is infinite dimensional, 
but each weight piece is finite dimensional. 
Thus we have a well-defined character $\ch^T(H^0(X,\cO))$ in 
$\ZZ[\![e^{\lambda}]\!]:=\ZZ[\![e^{\lambda_1},\dots,e^{\lambda_m}]\!]$. 
More generally, if $V$ is a locally finite $T$-representation that is finitely 
generated as an $H^0(X,\cO)$-module, the character 
$\ch^T(V)$ lies in $\ZZ[\![e^{\lambda}]\!][e^{-\lambda}] := 
\ZZ[\![e^{\lambda}]\!]
[e^{-\lambda_1},\dots,e^{-\lambda_m}]$. 
An important fact is that \emph{$\ch^T(V)$ becomes a rational function 
in $e^{\lambda_1},\dots,e^{\lambda_m}$} for such $V$, see \cite{Thomason:3}. 
In other words, $\ch^T(V)$ lies in: 
\[
\ZZ[\![e^{\lambda}]\!][e^{-\lambda}]_{\rm rat}
:= \left\{ f \in \ZZ[\![e^{\lambda}]\!]
[e^{-\lambda}] : 
\begin{array}{l}
\text{$f$ is the Laurent expansion of a rational function} \\ 
\text{in $\CC(e^{\lambda_1},\dots,
e^{\lambda_m})$ at $e^{\lambda_1} = 
\cdots = e^{\lambda_m}=0$}
\end{array}\right\}
\]
For a $T$-equivariant vector bundle $E$ on $X$, 
the cohomology groups $H^i(X,E)$ are finitely generated 
$H^0(X,\cO)$-modules since $|X|$ is semi-projective. 
Therefore we can define the equivariant Euler characteristic $\chi(E) \in \ZZ[\![e^\lambda]\!][e^{-\lambda}]_{\rm rat}$ as:
\begin{equation} 
\label{eq:Euler_char}
\chi(E) := \sum_{i=0}^{\dim X} 
(-1)^i \ch^T\big(H^i(X,E)\big) 
\end{equation} 

Let $R_T = H^\bullet_T({\rm pt},\CC)$, and let $S_T$ denote the localization of $R_T$ with respect to the set of non-zero homogeneous elements.
We expect that properties (P1)~and~(P2) are sufficient to imply the following 
equivariant Hirzebruch--Riemann--Roch (HRR) formula:  
\begin{equation}
\label{eq:equiv_HRR} 
\chi(E) = 
\int_{IX} \tch(E) \cup \tTd_X. 
\end{equation} 
This identity should be interpreted with care. 
The right-hand side is an equivariant integral (defined via the localization 
formula) of an element 
of $H^{\bullet\bullet}_T(IX)$, and lies in a completion $\hS_T$ of $S_T$:  
\[
\hS_T := \left\{
\sum_{n\in \ZZ} a_n : \text{$a_n \in S_T^n$, there exists $n_0\in \ZZ$ such that $a_n = 0$ for all $n< n_0$} \right\}  
\]
where $S_T^n$ denotes the degree $n$ graded 
component of $S_T$. 
As we discussed above, the left-hand side of \eqref{eq:equiv_HRR} 
lies in $\ZZ[\![e^\lambda]\!][e^{-\lambda}]_{\rm rat}$ 
and is given by a rational function $f(e^{\lambda_1},
\dots, e^{\lambda_m})$. We take the Laurent expansion 
of $g(t) = f(e^{t\lambda_1},\dots,e^{t\lambda_m})$ at 
$t=0$ and obtain an expression $g(t)=\sum_{n\ge n_0} g_n t^n$ 
with $g_n \in S_T^n$. The HRR formula \eqref{eq:equiv_HRR} 
claims that the element $\sum_{n\ge n_0} g_n \in \hS_T$ thus obtained 
is equal to the right-hand side of~\eqref{eq:equiv_HRR}. Note that we have the following inclusions of 
rings: 
\[
\ZZ[\![e^{\lambda}]\!][e^{-\lambda}] \supset 
\ZZ[\![e^{\lambda}]\!][e^{-\lambda}]_{\rm rat} 
\lhook\joinrel\relbar\joinrel\rightarrow \hS_T.  
\]

Non-equivariant versions of the HRR formula~\eqref{eq:equiv_HRR} for orbifolds and Deligne--Mumford stacks have been established by Kawasaki~\cite{Kawasaki} and Toen~\cite{Toen}.  (In the non-equivariant case, $X$ has to be compact so that both sides of \eqref{eq:equiv_HRR} are well-defined.)  The equivariant index theorem has been studied by many authors (see e.g.~\cite{Berline-Getzler-Vergne, Edidin-Graham, Kock} and references therein) and the formula \eqref{eq:equiv_HRR} is known to hold (at least) for compact smooth manifolds~\cite{Duistermaat, Edidin-Graham}, compact orbifolds~\cite{Vergne}, and proper Deligne--Mumford stacks~\cite{Edidin}.  We could not, however, find a reference for the formula~\eqref{eq:equiv_HRR} for non-proper Deligne--Mumford stacks. In~\S\ref{sec:localization}, we establish \eqref{eq:equiv_HRR} for toric Deligne--Mumford stacks, using localization in equivariant $K$-theory.

\begin{example} 
Consider $\CC^2$ with the diagonal $\CC^\times$-action. 
The Euler characteristic 
of the structure sheaf is: 
\[
\ch^{\CC^\times}(H^0(\CC^2,\cO))  = \sum_{n=0}^\infty (n+1) 
e^{n\lambda}.
\]
On the other hand, 
\[
\int_{\CC^2} \Td_{\CC^2}^{\CC^\times} = \int_{\CC^2} \frac{(-\lambda)^2}
{(1-e^{\lambda})^2} = \frac{1}{(1-e^{\lambda})^2} 
= \frac{1}{\lambda^2} - \frac{1}{\lambda} + \frac{5}{12} 
- \frac{1}{12} \lambda + \frac{1}{240}\lambda^2 + \cdots.  
\]
The two quantities match. 
If we consider instead the anti-diagonal $\CC^\times$-action 
$(x,y) \mapsto (s^{-1}x, s y)$ on $\CC^2$, 
the Euler characteristic is ill-defined since each weight 
subspace is infinite dimensional; this action does not satisfy our assumptions.  
\end{example} 

\section{Localization in Equivariant $K$-Theory}
\label{sec:localization}

In this section we prove the Localization Theorem for the $T$-equivariant $K$-theory of toric Deligne--Mumford stacks, using methods and results of Thomason~\cite{Thomason:1,Thomason:2,Thomason:3}.  We then deduce the $T$-equivariant Hirzebruch--Rieman--Roch formula~\eqref{eq:equiv_HRR}.

\subsection{Toric Deligne--Mumford Stacks as GIT Quotients}
\label{sec:GIT}
The definition of toric Deligne-Mumford stacks is given by \cite{Borisov--Chen--Smith}, and we mainly follow the notations in \S 4 of \cite{Coates--Iritani--Jiang}. 
Let $K = (\Cstar)^r$.  Let $\LL= \Hom(\Cstar,K)$ denote the cocharacter lattice of $K$, so that $\LL^\vee = 
\Hom(K,\Cstar)$ is the lattice of characters, and fix characters $D_1,\ldots,D_m \in \LL^\vee$.  This choice of characters defines a map from $K$ 
to the torus $T = (\Cstar)^m$, and hence  an action of $K$ on $\CC^m$.  

\begin{notation}
  \label{not:cones}
  For a subset $I$ of $\{1, 2, \ldots, m\}$, write $\overline{I}$ 
  for the complement of $I$, and set:
  \begin{align*}
    \Cone_I &= \big\{ \textstyle\sum_{i \in I} a_i D_i : \text{$a_i \in
      \RR$, $a_i > 0$} \big\} \subset
    \LL^\vee \otimes \RR \\
    (\Cstar)^I \times \CC^{\overline{I}} 
    & = \big \{ (z_1,\dots,z_m) : z_i \neq 0 \text{ for } i\in I 
\big\} \subset \CC^m
   \end{align*} 
   We set $\Cone_\emptyset = \{0\}$. 
\end{notation} 

\begin{definition} 
\label{def:toricstack} 
Consider now a \emph{stability condition} 
$\omega \in \LL^\vee \otimes \RR$, and set:
\begin{align*}  
    \cA_\omega &= 
    \big\{ I \subset \{1,2,\ldots,m\} : \omega \in \Cone_I \big\} \\
    U_\omega &=\bigcup_{I\in \cA_\omega} 
(\Cstar)^I \times \CC^{\overline{I}} \\
    X_\omega &= \big[U_\omega \big/ K \big]
\end{align*}
The square brackets here indicate that $X_\omega$ is the stack quotient 
of $U_\omega$ (which is $K$-invariant) by $K$.  
We call $X_\omega$ the \emph{toric stack 
associated to the GIT data} $(K;\LL;D_1,\ldots,D_m; \omega)$. 
Elements of $\cA_\omega$ are called \emph{anticones}\footnote{This terminology is explained in~\cite[\S4.2]{Coates--Iritani--Jiang}.}.
\end{definition}
\begin{example}
 \label{ex:conifold1}
Let us give a simple example, purely to demystify all this notation. Set
$r=1$, so that $K=\Cstar$ and $\LL^\vee=\ZZ$. Now set $m=4$, and choose
$D_1=D_2=1$ and $D_3=D_4=-1$. This means we are considering a GIT quotient
of $\CC^4$ under a diagonal $\Cstar$ action having weights $(1,1,-1,-1)$. We
need to choose a stability condition $\omega$  in $\LL^\vee\otimes \RR=\RR$,
and let us choose $\omega$ to be positive. Then $\cA_\omega$ consists of all
subsets $I\subset \{1,..,4\}$ such that either $1\in I$ or $2\in I$. Hence
$U_\omega$ contains the two open sets:
$$(\Cstar)^{\{1\}}\times \CC^{\{2,3,4\}} = \{z_1\neq
0\}\hspace{2cm}\mbox{and}\hspace{2cm} (\Cstar)^{\{2\}}\times \CC^{\{1,3,4\}}
= \{z_2\neq 0\}$$
In fact these two open sets cover $U_\omega$  -- any other $I\in \cA_\omega$
corresponds to a subset of at least one of these two -- so $U_\omega$ is the
subset $\{ (z_1, z_2)\neq (0,0)\}\subset \CC^4$.  Thus $X_\omega$ is the
total space of the rank 2 vector bundle $\cO(-1)^{\oplus 2}$ over $\PP^1$.
\end{example}

Unless otherwise stated, we will consider only GIT data that satisfy:
\begin{equation}
  \label{eq:assumption}
  \parbox{0.8\textwidth}{
    \begin{itemize}
    \item $\{1,2,\ldots,m\} \in \cA_\omega$;
    \item for each $I \in \cA_\omega$, the set $\{D_i : i \in I\}$ spans $\LL^\vee \otimes \RR$ over $\RR$.
    \end{itemize}
  }
\end{equation}
The first condition here ensures that $X_\omega$ is non-empty; 
the second ensures that $X_\omega$ is a Deligne--Mumford stack. 
These conditions imply that $\cA_\omega$ is closed under 
enlargement of sets, so that if $I\in \cA_\omega$ and 
$I\subset J$ then $J \in \cA_\omega$.   

Fixed points of the $T$-action on $X_\omega$ are 
in one-to-one correspondence with minimal anticones, that is, with $\delta \in \cA_\omega$ such that $|\delta| = r$.  
A minimal anticone $\delta$ corresponds to the $T$-fixed point:
\[
\big[\{(z_1,\ldots,z_n) \in U_\omega : \text{$z_i = 0$ if $i \not \in
  \delta$}\} \big/ K \big] = \big[ (\Cstar)^\delta \big/ K \big] 
\]
Let $\Fix_\omega$ denote the set of minimal anticones for $X_\omega$.

\subsection{The Localization Theorem}

We now state and prove our Localization Theorem.

\begin{theorem}
\label{thm:localization}
Let $X_\omega = \big[U_\omega \big/ K \big]$ be 
a toric Deligne--Mumford stack as above.
Recall that the torus $T$ acts (ineffectively) on $X_\omega$.  
Given $\delta \in \Fix_\omega$, write $x_\delta$ for 
the corresponding $T$-fixed point of $X_\omega$, 
so that $x_\delta \cong B G_\delta$ 
where $G_\delta$ is the isotropy subgroup of $x_\delta$. 
Let 
$i_\delta \colon x_\delta \to X_\omega$ denote the 
inclusion and let $N_\delta$ denote the normal bundle 
to $i_\delta$. 
Let $\ZZ[T] = K_T^0({\rm pt})$ denote the ring of 
regular functions (over $\ZZ$) on $T$ and let $\Frac\ZZ[T]$ 
denote the field of fractions. 
Then for $\alpha \in K_T^0(X_\omega)$, we have:  
\begin{align*}
\alpha = \sum_{\delta\in \Fix_\omega} (i_\delta)_\star 
\left( 
\frac{i_\delta^\star \alpha}{\lambda_{-1} N_\delta^\vee}
\right) 
&& \text{in $K_T^0(X_\omega)\otimes_{\ZZ[T]} \Frac(\ZZ[T])$}
\end{align*}
where $\lambda_{-1} N_\delta^\vee  
:= \sum_{i=0}^{\dim X_\omega} (-1)^i \bigwedge^i N_\delta^\vee$ 
is invertible in $K_T^0(x_\delta)\otimes_{\ZZ[T]} 
\Frac(\ZZ[T])$. 
\end{theorem}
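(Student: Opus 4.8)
The strategy is to reduce to Thomason's concentration theorem in equivariant algebraic $K$-theory. The first step is to identify a subtorus $T' \subset T$ whose fixed locus equals the (isolated) $T$-fixed points: since the $T$-action on $X_\omega$ factors through the effective action of $T/K$ on the toric DM stack, and the $T$-fixed points $x_\delta$ are isolated, one can choose a generic one-parameter subgroup $\iota\colon \Cstar \hookrightarrow T$ so that $X_\omega^{\Cstar} = X_\omega^T = \bigsqcup_{\delta} x_\delta$. (One must check this is possible even though $X_\omega$ is non-compact: the $T$-weights on each normal bundle $N_\delta$ are non-zero, so a generic cocharacter pairs non-trivially with all of them; finiteness of $\Fix_\omega$ makes "generic" well-defined.) Then $X_\omega$ with its $T$-action and the closed embedding $\bigsqcup_\delta x_\delta \hookrightarrow X_\omega$ satisfy the hypotheses of Thomason's localization theorem \cite{Thomason:2,Thomason:3}: $X_\omega$ is a separated Noetherian Deligne--Mumford stack (equivalently, a quotient stack $[U_\omega/K]$ with $U_\omega$ quasi-projective and $K$ acting with finite stabilizers) carrying an action of the torus $T$.

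Second, Thomason's theorem gives that the pushforward
\[
\bigoplus_{\delta \in \Fix_\omega} (i_\delta)_\star \colon \bigoplus_{\delta} K^0_T(x_\delta) \otimes_{\ZZ[T]} \Frac(\ZZ[T]) \longrightarrow K^0_T(X_\omega) \otimes_{\ZZ[T]} \Frac(\ZZ[T])
\]
is an isomorphism after localizing at the multiplicative set of elements of $\ZZ[T]$ that restrict to nowhere-vanishing functions on every $x_\delta$; since the $x_\delta$ are $T$-fixed, this amounts to inverting enough characters, and because $\Fix_\omega$ is finite one can invert all of $\ZZ[T] \setminus \{0\}$, i.e. work over $\Frac(\ZZ[T])$. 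Here $K^0_T(x_\delta) = K^0_T(BG_\delta)$ is the representation ring of $G_\delta$ (a finite group) with its residual $T$-action; the key point that $\lambda_{-1}N_\delta^\vee$ is invertible over $\Frac(\ZZ[T])$ follows because the $T$-weights of $N_\delta$ are non-zero (this is exactly the genericity arranged in step one, phrased intrinsically), so each factor $1 - e^{-w}$ with $w$ a non-zero weight is a nonzero element of $\Frac(\ZZ[T])$.

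Third, one computes the coefficients. Given $\alpha \in K^0_T(X_\omega)$, write $\alpha = \sum_\delta (i_\delta)_\star(\beta_\delta)$ with $\beta_\delta \in K^0_T(x_\delta)\otimes \Frac(\ZZ[T])$ by the isomorphism above. Applying $i_\epsilon^\star$ and using the self-intersection (excess) formula $i_\epsilon^\star (i_\delta)_\star = 0$ for $\delta \neq \epsilon$ (disjoint fixed points) and $i_\delta^\star (i_\delta)_\star(\beta) = \beta \cdot \lambda_{-1}N_\delta^\vee$ yields $i_\delta^\star \alpha = \beta_\delta \cdot \lambda_{-1}N_\delta^\vee$, hence $\beta_\delta = i_\delta^\star\alpha / \lambda_{-1}N_\delta^\vee$, which is the claimed formula. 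The self-intersection formula for a regular closed embedding of the classifying stack of a finite group into a smooth DM stack is standard (Koszul resolution of the diagonal), but in the equivariant setting one should cite it carefully or record the short argument.

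The main obstacle I anticipate is \emph{not} any single step but the bookkeeping needed to invoke Thomason's theorems in the precise generality of separated Noetherian DM stacks with torus action while $X_\omega$ is non-proper: one must confirm that $K^0_T(X_\omega)$ is a module over $\ZZ[T] = R_{\Cstar}$ (or rather over $K^0_T(\mathrm{pt})$) for which localization is exact, that the fixed-point embedding is $T$-equivariant and closed, and that Thomason's "support" hypothesis is met, i.e. that after inverting the correct characters every class is supported on the fixed locus. Equivalently, via the presentation $X_\omega = [U_\omega/K]$ one can transfer everything to $T/K$-equivariant (or $T$-equivariant on $U_\omega$) $K$-theory of the quasi-projective scheme $U_\omega$, where Thomason's original statements apply verbatim; reconciling the two viewpoints — stacky versus the $K$-quotient presentation — is the step that requires the most care, though no new ideas.
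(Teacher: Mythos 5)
Your overall strategy --- Thomason's concentration theorem to show that the localized $K$-group is the direct sum of the contributions of the fixed points, followed by the self-intersection formula to identify the inverse of $(i_\delta)_\star$ as $(\lambda_{-1}N_\delta^\vee)^{-1}\cdot i_\delta^\star(-)$ --- is exactly the paper's strategy; the paper likewise cites Thomason for both the concentration statement and the self-intersection/invertibility lemmas. However, the step you yourself flag as ``requiring the most care but no new ideas'' is where the one real idea of the proof lives, and your two proposed reductions do not work as stated. A generic one-parameter subgroup $\Cstar\subset T$, or indeed all of $T$, acting on the atlas $U_\omega\subset\CC^m$ has fixed locus contained in the origin, which is generically unstable; the loci $(\Cstar)^\delta\subset U_\omega$ lying over the stacky fixed points $x_\delta$ are \emph{not} fixed pointwise by any nontrivial subtorus of $T$ --- they are fixed only modulo $K$. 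So ``transfer everything to $T$-equivariant $K$-theory on the quasi-projective scheme $U_\omega$'' would concentrate the localized $K$-group on the empty set, and the generic-$\Cstar$ formulation only makes sense on the stack, where Thomason's scheme-level theorems do not apply verbatim.

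The paper's resolution is to work with $K^0_T(X_\omega)=K^0_{T\times K}(U_\omega)$ and to apply Thomason's support theorem over the full ring $\ZZ[T\times K]$: the module is supported on the union of the subtori $T_\delta=\{(t,k): \pi_\delta(t)=\pi_\delta(k)\}$, each of which is a \emph{finite cover} of $T$ with Galois group $G_\delta$ under the projection $T\times K\to T$. It is precisely this finiteness that makes base change along $\ZZ[T]\to\Frac(\ZZ[T])$ (rather than along $\ZZ[T\times K]\to\Frac(\ZZ[T\times K])$) concentrate the module at the finitely many generic points $\xi_\delta$ of the $T_\delta$, where Thomason identifies the stalk with $(i_\delta)_\star$ of the stalk of $K^0_{T\times K}((\Cstar)^\delta)$ and supplies the invertibility of $\lambda_{-1}N_\delta^\vee$. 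You should supply this identification of the support (or an equivalent device) to close the gap; once it is in place, your remaining steps go through and agree with the paper's.
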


\begin{proof}
We have that $K^0_T(X_\omega) = K^0_{T\times K}(U_\omega)$, 
where the action of $(t,k) \in T \times K$ on $U_\omega$ is given by 
the action of $t k^{-1} \in T$ on $U_\omega$.  
As a module over $K_{T\times K}^0({\rm pt}) = \ZZ[T\times K]$, 
$K^0_{T\times K}(U_\omega)$ is supported\footnote{The equivariant $K$-group $K^0_{T\times K}(U_\omega)$ is a module over $K_{T\times K}^0({\rm pt}) = \ZZ[T\times K]$, and hence defines a sheaf on $\Spec \ZZ[T\times K]$.  The support of $K^0_{T\times K}(U_\omega)$ means the support of this sheaf.}
 on 
the set of points $(t,k)\in T\times K$ such that 
$(t,k)$ has a fixed point in $U_\omega$ 
\cite[Th\'{e}or\`{e}me 2.1]{Thomason:3}. 
Therefore the support of $K_{T\times K}^0(U_\omega)$ 
is the union $\bigcup_{\delta \in \Fix_\omega} T_\delta$ 
of subtori $T_\delta$ defined by 
\begin{equation}
\label{eq:T_delta}
T_\delta = \{(t,k) \in T\times K : \pi_\delta(t) = \pi_\delta(k) \}. 
\end{equation} 
Here $\pi_\delta \colon T = (\Cstar)^m \to (\Cstar)^\delta$ is the natural 
projection. 
Note that $T_\delta$ fixes the locus 
$(\Cstar)^\delta \subset U_\omega$ 
corresponding to the fixed point $x_\delta$. 
The torus $T_\delta$ is connected and 
the natural projection $T_\delta \to T$ is a finite covering 
with Galois group $G_\delta$. 
Therefore the localization $K_{T\times K}^0(U_\omega) 
\otimes_{\ZZ[T]} \Frac(\ZZ[T])$ is supported 
on finitely many points, which are the generic points $\xi_\delta$ 
of $T_\delta$. 
On the other hand, the stalk of $K_{T\times K}^0(U_\omega)$ 
at $\xi_\delta$ is given by the isomorphism 
\cite[Th\'{e}or\`{e}me 2.1]{Thomason:3}: 
\begin{equation} 
\label{eq:i_delta_star} 
(i_{\delta})_\star \colon K_{T\times K}^0((\Cstar)^\delta)_{\xi_\delta} 
\xrightarrow{\cong}
K_{T\times K}^0(U_\omega)_{\xi_\delta}
\end{equation} 
The localization $K^0_{T\times K}(U_\omega) \otimes_{\ZZ[T]} 
\Frac(\ZZ[T])$ is the direct sum of these stalks. 
For the same reason, we have:
\[
K_T^0(x_{\delta}) \otimes_{\ZZ[T]} \Frac(\ZZ[T]) = 
K_{T\times K}^0((\Cstar)^\delta)) 
\otimes_{\ZZ[T]}\Frac (\ZZ[T]) 
= 
K_{T\times K}^0((\Cstar)^\delta)_{\xi_\delta}
\] 
The inverse to \eqref{eq:i_delta_star} is given by 
$(\lambda_{-1} N_\delta^\vee)^{-1}\cdot i_\delta^\star(-)$ by~\cite[Lemma 3.3]{Thomason:3}. The conclusion follows. 
We remark that 
$\lambda_{-1} N_\delta^\vee$ 
is invertible in $K^0_{T\times K}((\Cstar)^\delta)_{\xi_\delta}$ 
by \cite[Lemma 3.2]{Thomason:3}. 
\end{proof}

\begin{corollary}
\label{cor:fixed_point_formula} 
Let the notation be as in Theorem \ref{thm:localization}. 
For $\alpha\in K^0_T(X_\omega)$, we have 
  \[
  \chi(\alpha) = \sum_{\delta \in \Fix_\omega} 
\chi\left(\frac{i_\delta^\star \alpha}{\lambda_{-1} N_\delta^\vee}\right)
  \]
where $\chi(-)$ denotes the $T$-equivariant Euler characteristic 
given in \eqref{eq:Euler_char}. 
\end{corollary}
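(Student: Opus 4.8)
The plan is to apply the equivariant Euler characteristic $\chi(-)$ to both sides of the identity in Theorem \ref{thm:localization}, and then check that $\chi$ is well-defined and additive on the localized $K$-group so that the manipulation makes sense. First I would note that the identity of Theorem \ref{thm:localization} lives in $K_T^0(X_\omega) \otimes_{\ZZ[T]} \Frac(\ZZ[T])$, whereas $\chi$ as defined in \eqref{eq:Euler_char} is \emph{a priori} only defined on $K_T^0(X_\omega)$ itself, with values in $\ZZ[\![e^\lambda]\!][e^{-\lambda}]_{\rm rat}$. The key observation is that the latter ring is a $\ZZ[T] = \ZZ[e^{\pm\lambda}]$-algebra in which every nonzero element of $\ZZ[T]$ that is ``supported away from the small torus'' becomes invertible; more precisely, $\ch^T \colon K_T^0({\rm pt}) = \ZZ[T] \to \ZZ[\![e^\lambda]\!][e^{-\lambda}]_{\rm rat}$ extends to the localizations appearing in the statement because each $\lambda_{-1}N_\delta^\vee$ has nonzero image (its ``denominator'' is a product of terms $1 - e^{w}$ with $w$ a nonzero weight on $N_\delta$, which is a nonzero rational function). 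Hence $\chi$ extends $\ZZ[T]$-linearly to a map on $K_T^0(X_\omega)\otimes_{\ZZ[T]}\Frac(\ZZ[T])$, at least after we invert the relevant elements, and applying this extension to the displayed formula of Theorem \ref{thm:localization} yields
\[
\chi(\alpha) = \sum_{\delta\in\Fix_\omega} \chi\!\left( (i_\delta)_\star\!\left(\frac{i_\delta^\star\alpha}{\lambda_{-1}N_\delta^\vee}\right)\right).
\]

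Next I would dispose of the pushforward. The point $x_\delta \cong BG_\delta$ is compact (indeed $\Fix_\omega$ is finite and $X^T$ is compact by property (P2)), and $i_\delta \colon x_\delta \to X_\omega$ is a proper morphism (a closed immersion of a point into a semi-projective stack), so $(i_\delta)_\star$ on $K$-theory is computed by the derived pushforward $R(i_\delta)_*$ of the corresponding complex of sheaves. By the Leray spectral sequence / composition of derived functors, $R\Gamma(X_\omega, R(i_\delta)_* \mathcal{F}) = R\Gamma(x_\delta, \mathcal{F})$ for any $T$-equivariant coherent sheaf $\mathcal{F}$ on $x_\delta$, and hence $\chi_{X_\omega}\circ (i_\delta)_\star = \chi_{x_\delta}$ as maps $K_T^0(x_\delta) \to \ZZ[\![e^\lambda]\!][e^{-\lambda}]_{\rm rat}$ — and this identity persists after localizing, since both sides are $\ZZ[T]$-linear and we have already checked the target absorbs the necessary inversions. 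The class $i_\delta^\star\alpha / \lambda_{-1}N_\delta^\vee$ is an element of $K_T^0(x_\delta)\otimes_{\ZZ[T]}\Frac(\ZZ[T])$, and applying $\chi_{x_\delta}$ to it gives exactly $\chi\big(i_\delta^\star\alpha / \lambda_{-1}N_\delta^\vee\big)$ in the notation of the statement (here $\chi$ on $BG_\delta$ is just the $T$-invariant part of the $G_\delta$-character, expanded as a rational function). Substituting this into the displayed equation above gives the claimed formula.

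The main obstacle — the only genuinely delicate point — is the bookkeeping around \emph{where} the various objects live and \emph{why} $\chi$ is legitimately defined on the localization. One has to be careful that $\chi$ on $K_T^0(X_\omega)$ takes values not in all of $\Frac(\ZZ[T])$ but in the subring $\ZZ[\![e^\lambda]\!][e^{-\lambda}]_{\rm rat}$ of Laurent expansions of rational functions; the content of the argument is that $\ZZ[\![e^\lambda]\!][e^{-\lambda}]_{\rm rat}$ contains $\ZZ[T]$ and is a $\ZZ[T]$-algebra in which $\lambda_{-1}N_\delta^\vee$ is invertible, so that $\chi$ factors through (the relevant localization of) $K_T^0(X_\omega)\otimes_{\ZZ[T]}\Frac(\ZZ[T])$. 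This is where one invokes Thomason's result that equivariant Euler characteristics of finitely generated modules are rational functions, together with the explicit form of $\lambda_{-1}N_\delta^\vee$ at the generic point $\xi_\delta$: its nonvanishing there (Lemma 3.2 of \cite{Thomason:3}, already cited) is exactly what guarantees that dividing by it is harmless after localization. Once this compatibility is nailed down, the corollary is a one-line consequence of Theorem \ref{thm:localization} plus functoriality of Euler characteristics under proper pushforward.
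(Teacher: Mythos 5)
Your proposal is correct and follows essentially the same route as the paper: extend $\chi$ $\ZZ[T]$-linearly to $K_T^0(X_\omega)\otimes_{\ZZ[T]}\Frac(\ZZ[T])$ using Thomason's rationality result, apply it to both sides of Theorem~\ref{thm:localization}, and absorb the pushforward via $\chi\circ(i_\delta)_\star=\chi_{x_\delta}$. The only cosmetic difference is that the paper sidesteps your (somewhat delicate, and as stated slightly imprecise) discussion of which elements of $\ZZ[T]$ become invertible in $\ZZ[\![e^{\lambda}]\!][e^{-\lambda}]_{\rm rat}$ by regarding $\chi$ as landing directly in $\Frac(\ZZ[T])$ --- i.e.\ passing to the rational function rather than its Laurent expansion --- so that the extension of scalars is immediate.
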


\begin{proof}
The discussion in \S \ref{sec:HRR} shows that $\chi$ defines 
a $\ZZ[T]$-linear map 
\[
K_T^0(X_\omega) \to \Frac(\ZZ[T])  
\]
which, by extension of scalars, gives 
$K_T^0(X_\omega)\otimes_{\ZZ[T]} \Frac(\ZZ[T]) \to \Frac(\ZZ[T])$. 
Corollary \ref{cor:fixed_point_formula} is thus an immediate consequence 
of Theorem \ref{thm:localization}. 
\end{proof}

\begin{corollary} 
\label{cor:equiv_HRR} 
The $T$-equivariant Hirzebruch--Riemann--Roch formula 
\eqref{eq:equiv_HRR} 
holds when $X$ is a toric Deligne--Mumford stack with semi-projective coarse moduli space and the torus-fixed set $X^T$ is non-empty.
\end{corollary}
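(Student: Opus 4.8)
The plan is to compare two fixed-point formulae --- the $K$-theoretic one of Corollary~\ref{cor:fixed_point_formula} and the cohomological (Atiyah--Bott) one obtained by expanding the equivariant integral over $IX$ --- and thereby reduce the formula \eqref{eq:equiv_HRR} for $X = X_\omega$ to the corresponding (elementary) formula for the point-stacks $x_\delta\cong BG_\delta$ attached to the $T$-fixed points.

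First I would apply the equivariant Euler characteristic to Corollary~\ref{cor:fixed_point_formula} to get
\[
\chi(E) = \sum_{\delta\in\Fix_\omega}\chi\!\left(\frac{i_\delta^\star E}{\lambda_{-1}N_\delta^\vee}\right),
\]
where each summand is the equivariant Euler characteristic of a class in $K^0_T(x_\delta)\otimes_{\ZZ[T]}\Frac(\ZZ[T])$, in which $\lambda_{-1}N_\delta^\vee$ is invertible. Since $x_\delta\cong BG_\delta$ is a point-stack with $G_\delta$ finite abelian, the HRR formula holds on $x_\delta$: for a $T$-equivariant bundle $V$ on $x_\delta$ one has
\[
\chi(V) = \ch^T(V^{G_\delta}) = \frac{1}{|G_\delta|}\sum_{g\in G_\delta}\tch(V)|_g = \int_{Ix_\delta}\tch(V)\cup\tTd_{x_\delta},
\]
using that $\frac{1}{|G_\delta|}\sum_g g$ is the projector onto $G_\delta$-invariants, that $\int_{BG_\delta} = \frac{1}{|G_\delta|}$, and that $\tTd_{x_\delta} = 1$ since $x_\delta$ is a point. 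As both sides are $\ZZ[T]$-linear in $V$, this extends to the localization, so
\[
\chi(E) = \sum_{\delta\in\Fix_\omega}\int_{Ix_\delta}\frac{\tch(i_\delta^\star E)}{\tch(\lambda_{-1}N_\delta^\vee)}.
\]

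Next I would expand the right-hand side of \eqref{eq:equiv_HRR} by equivariant cohomological localization. Writing $IX = \bigsqcup_v X_v$, each component $X_v$ is again a toric Deligne--Mumford stack, semi-projective, with finite --- hence compact --- $T$-fixed locus, so Atiyah--Bott applies on each $X_v$. The $T$-fixed points of $IX$ are the pairs $(x_\delta, g)$ with $\delta\in\Fix_\omega$ and $g\in G_\delta$, the pair $(x_\delta,g)$ lying on the component $X_v$ for which the stabilizer $g_v$ equals $g$, so $\bigsqcup_v X_v^T = \bigsqcup_\delta Ix_\delta$. At such a point the (stacky) tangent space of $X_v$ is the $g$-invariant subspace $(N_\delta)^g$ of $N_\delta = i_\delta^\star TX$ --- precisely the $\theta = 0$ eigenbundle in the $g$-eigendecomposition of $q_v^* TX$ used to define $\tTd_X$ --- and, the point being isolated, its equivariant normal Euler class is $e_T\big((N_\delta)^g\big) = \prod_i \delta_{v,0,i}$. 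Feeding this into the Atiyah--Bott formula, together with the elementary Chern-root identity
\begin{align*}
\frac{\tTd_X|_{(x_\delta,g)}}{e_T\big((N_\delta)^g\big)}
&= \left(\prod_i\frac{1}{\delta_{v,0,i}}\right)\left(\prod_{\theta\ne 0}\prod_i\frac{1}{1-e^{-2\pi\tti\theta}e^{-\delta_{v,\theta,i}}}\right)\prod_i\frac{\delta_{v,0,i}}{1-e^{-\delta_{v,0,i}}} \\
&= \frac{1}{\tch(\lambda_{-1}N_\delta^\vee)|_g}
\end{align*}
and the fact that $\tch(E)|_{(x_\delta,g)} = \tch(i_\delta^\star E)|_g$, I would obtain
\[
\int_{IX}\tch(E)\cup\tTd_X
= \sum_{\delta\in\Fix_\omega}\sum_{g\in G_\delta}\frac{1}{|G_\delta|}\frac{\tch(i_\delta^\star E)|_g}{\tch(\lambda_{-1}N_\delta^\vee)|_g}
= \sum_{\delta\in\Fix_\omega}\int_{Ix_\delta}\frac{\tch(i_\delta^\star E)}{\tch(\lambda_{-1}N_\delta^\vee)},
\]
which matches the expansion of $\chi(E)$ above. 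The equality then holds in $\hS_T$ as required, since by the point-stack HRR already used each term $\int_{Ix_\delta}\tch(i_\delta^\star E)/\tch(\lambda_{-1}N_\delta^\vee)$ is the Laurent expansion at $t = 0$ of the rational function $\chi(i_\delta^\star E/\lambda_{-1}N_\delta^\vee)$, and these sum to the Laurent expansion of $\chi(E)$.

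The step I expect to be the main obstacle is the cohomological localization in the third paragraph: one must check that each inertia component $X_v$ is sufficiently tame for Atiyah--Bott --- compactness of $X_v^T$, and validity of the localization theorem for non-proper $X_v$, in parallel with the $K$-theoretic Theorem~\ref{thm:localization} --- and one must correctly identify the $T$-equivariant normal bundle of $(x_\delta,g)$ in $X_v$ with the $g$-invariant part of $N_\delta$. Once these identifications are in place, matching the two fixed-point formulae is routine bookkeeping with the definitions of $\tch$ and $\tTd$.
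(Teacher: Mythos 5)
Your proposal is correct and follows essentially the same route as the paper: both sides are localized to the fixed points, and the matching reduces to the character identity $\chi(W)=\frac{1}{|G_\delta|}\sum_{g\in G_\delta}\Tr(ge^{\lambda}\colon W)$ together with $\Tr(ge^{\lambda}\colon\lambda_{-1}N_\delta^\vee)=e_T(N_{\delta,g})/\tTd(N_\delta)_g$, which is exactly your Chern-root identity. The ``main obstacle'' you flag is in fact moot, since the paper \emph{defines} the equivariant integral over $IX$ via the localization formula, so no Atiyah--Bott theorem for non-proper $X_v$ is needed.
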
 

\begin{proof}
We compute the right-hand side of the HRR formula \eqref{eq:equiv_HRR} 
using localization in equivariant cohomology, and match it with 
the fixed point formula in Corollary~\ref{cor:fixed_point_formula}. 
Recall the $(T \times K)$-action in the proof of Theorem \ref{thm:localization}.
It suffices to show that 
\[
\chi\left(\frac{V}{\lambda_{-1} N_\delta}\right) = 
\frac{1}{|G_\delta|} 
\sum_{g \in G_\delta}\frac{\tch(V)_g \tTd(N_\delta)_g}{e_T(N_{\delta,g})}  
\]
for a $(T\times K)$-representation $V$. 
Here we regard $V$ as a $(T\times K)$-equivariant vector bundle 
on $(\Cstar)^\delta$, which is the same thing as 
a $T$-equivariant vector bundle on $x_\delta = [(\Cstar)^\delta/K]$.  
The index $g\in G_\delta$ parametrizes connected components of $IBG_\delta$, $\tch(\cdot)_g$ and $\tTd(\cdot)_g$ denote the components of the $T$-equivariant orbifold Chern character and $T$-equivariant orbifold Todd class along the component of $IBG_\delta$ indexed by $g$,
and $N_{\delta,g}$ is the $g$-fixed subbundle of $N_\delta$.  

Consider the subgroup $T_\delta$ of $T\times K$ in \eqref{eq:T_delta}. 
This is the stabilizer of the $(T\times K)$-action on $(\Cstar)^\delta$ and 
fits into the exact sequence: 
\[
\xymatrix{
  1 \ar[r] & G_\delta \ar[r] &  T_\delta \ar[r] & T \ar[r] & 1
}
\]
A $(T\times K)$-representation $W$ can be viewed as 
a $T_\delta$-representation and the $G_\delta$-invariant
part $W^{G_\delta}$ gives a $T$-representation. 
The Euler characteristic of $W$, as a $T$-equivariant 
vector bundle on $x_\delta$, is then given by the 
$T$-character of $W^{G_\delta}$: 
\[
\chi(W) = \ch^T(W^{G_\delta}) = 
\frac{1}{|G_\delta|}\sum_{g\in G_\delta} \Tr(g e^{\lambda} \colon W) 
\]
where $\lambda \in \Lie(T)$ and $g e^{\lambda}$ gives an element 
of $T_\delta$. 
On the other hand, we have 
\begin{align*} 
\Tr(g e^{\lambda} \colon V) 
& =  \tch(V)_g 
\\
\Tr(g e^{\lambda} \colon \lambda_{-1} N_\delta^\vee) 
& = \frac{e_T(N_{\delta,g})}{\tTd(N_\delta)_g} 
\end{align*} 
by the definition of $\tch$ and $\tTd$ in \S\ref{sec:HRR}. 
The conclusion follows from the fact that 
$\Tr(ge^{\lambda} \colon -)$ preserves the product. 
\end{proof}

\section{Birational Transformations from Variation of GIT}
\label{sec:GIT_variation}

In this section we consider crepant birational transformations $\varphi \colon X_+ \dashrightarrow X_-$ between toric Deligne--Mumford stacks which arise from a variation of GIT quotient.  We construct a $K$-equivalence:
\begin{equation}
  \label{eq:K-equivalence}
  \begin{aligned}
    \xymatrix{
      & \widetilde{X} \ar[ld]_{f_+} \ar[rd]^{f_-} \\
      X_+  \ar@{-->}[rr]^{\varphi}  && X_- }
  \end{aligned}
\end{equation}
canonically associated to $\varphi$, and show that this too arises from a variation of GIT quotient. 

\bigskip

Recall that our GIT data in \S \ref{sec:GIT} 
consist of a torus $K \cong (\Cstar)^r$, 
the lattice $\LL = \Hom(\Cstar,K)$ of 
$\Cstar$-subgroups of $K$, 
and characters $D_1,\ldots,D_m \in \LL^\vee$.  
A choice of stability condition 
$\omega \in \LL^\vee \otimes \RR$ satisfying 
\eqref{eq:assumption} determines a toric 
Deligne--Mumford stack $X_\omega = \big[U_\omega/K\big]$.  
The space $\LL^\vee \otimes \RR$ of stability conditions is divided into 
chambers by the closures of the sets $\Cone_I$, $|I| = r-1$, 
and the Deligne--Mumford stack $X_\omega$ depends on $\omega$ 
only via the chamber containing $\omega$.  
For any stability condition $\omega$ satisfying \eqref{eq:assumption}, 
the set $U_\omega$ contains the big torus $T=(\Cstar)^m$, 
and thus for any two such stability conditions $\omega_1$,~$\omega_2$ 
there is a canonical birational map 
$X_{\omega_1} \dashrightarrow X_{\omega_2}$, 
induced by the identity transformation between 
$T/K \subset X_{\omega_1}$ and 
$T/K \subset X_{\omega_2}$.  

Consider now a birational transformation $X_+ \dashrightarrow X_-$ arising from a single wall-crossing in the space of stability conditions, as follows.
Let $C_+$,~$C_-$ be chambers 
in $\LL^\vee \otimes \RR$ that are separated by a hyperplane wall $W$, 
so that $W \cap \overline{C_+}$ is a facet of $\overline{C_+}$, 
$W \cap \overline{C_-}$ is a facet of $\overline{C_-}$, 
and $W \cap \overline{C_+} = W \cap \overline{C_-}$.  
Choose stability conditions $\omega_+ \in C_+$, $\omega_- \in C_-$ 
satisfying \eqref{eq:assumption} and set $U_+ := U_{\omega_+}$, $U_- := U_{\omega_-}$,
$X_+ := X_{\omega_+}$, $X_- := X_{\omega_-}$, and:
\begin{align*}
  & \cA_\pm := \cA_{\omega_{\pm}} = 
\big\{ I \subset \{1,2,\ldots,m\} : 
\omega_\pm \in \Cone_I \big\} 
\end{align*}
Then $C_\pm = \bigcap_{I\in \cA_\pm} \Cone_I$. 
Let $\varphi \colon X_+ \dashrightarrow X_-$ be 
the birational transformation induced by the toric wall-crossing from $C_+$ to $C_-$
and suppose that $\sum_{i=1}^m D_i \in W$: as we will see 
below this amounts to requiring that $\varphi$ is crepant. 
Let $e \in \LL$ denote the primitive lattice vector in $W^\perp$ 
such that $e$ is positive on $C_+$ and negative on $C_-$. 

\begin{example}\label{ex:conifold2}
Recall our Example \ref{ex:conifold1} where we quotiented $\CC^4$ by $\Cstar$ with weights $(1,1,-1,-1)$. Here there are exactly two chambers in $\LL^\vee\otimes \RR=\RR$, namely $C_+=\RR_+$ and $C_-=\RR_-$, and they are separated by the wall $W=\{0\}$.  Note that $\sum D_i = 0$ which does indeed lie in $W$, and that $e$ just the vector $1\in \RR$.

 Previously we chose a postive $\omega$ so we constructed $X_+$; if we now choose an $\omega_-\in C_-$ then we get that $U_- = \{(z_3,z_4)\neq (0,0)\}\subset \CC^4$, and $X_-$ is again the total space of $\cO(-1)^{\oplus 2}$ over (a different) $\PP^1$. The birational map $\varphi \colon X_+ \dashrightarrow X_-$ is the Atiyah flop. 
\end{example}

Choose $\omega_0$ from the relative interior of $W\cap \overline{C_+} 
= W \cap \overline{C_-}$. The stability condition $\omega_0$ 
does not satisfy our assumption \eqref{eq:assumption} on GIT data, but we can 
still consider 
\begin{align*} 
\cA_0 & := \cA_{\omega_0} = 
\left\{ I \subset \{1,\dots,m\} : 
\omega_0 \in \Cone_I \right\} 
\end{align*} 
and the corresponding toric Artin stack $X_0 := X_{\omega_0}
=[U_{\omega_0}/K]$ 
as given in Definition \ref{def:toricstack}. 
Here $X_0$ is not Deligne--Mumford, as 
the $\Cstar$-subgroup of $K$ 
corresponding to $e\in \LL$ (the defining equation 
of the wall $W$) has a fixed point in $U_0 := U_{\omega_0}$. 
The stack $X_0$ contains both $X_+$ and $X_-$ as open substacks 
and the canonical line bundles of $X_{+}$ and $X_-$ 
are the restrictions of the same line bundle $L_0\to X_0$ 
given by the character ${-\sum_{i=1}^m} D_i$ of $K$. 
The condition $\sum_{i=1}^m D_i\in W$ 
ensures that $L_0$ comes from 
a $\QQ$-Cartier divisor on the underlying singular 
toric variety 
$\overline{X}_0 = \CC^m/\!\!/_{\omega_0} K$.  There are canonical blow-down maps $g_\pm \colon X_\pm \to \overline{X}_0$, and $K_{X_\pm}=g_\pm^\star L_0$.  The maps $g_\pm$ will combine with diagram \eqref{eq:K-equivalence} to give a commutative diagram:
\[
\xymatrix{
  & \tX \ar[rd]^{f_-} \ar[ld]_{f_+} & \\ 
  X_+ \ar[rd]_{g_+} \ar@{-->}^{\varphi}[rr] &  & X_- \ar[ld]^{g_-} \\ 
  & \overline{X}_0 & 
}
\]
This shows that $f_+^\star(K_{X_+})$ and 
$f_-^\star(K_{X_-})$ coincide, since they 
are the pull-backs of the same $\QQ$-Cartier divisor on $\overline{X}_0$. 
The equality  $f_+^\star(K_{X_+}) = f_-^\star(K_{X_-})$  is what is meant by the birational map $\varphi$ being
\emph{crepant}, and by the diagram~\eqref{eq:K-equivalence} being a \emph{$K$-equivalence}. 

\begin{example}\label{ex:conifold3}
Let us continue with our Example \ref{ex:conifold2}. The stability condition $\omega_0$ can only be $0\in \RR$. This means that the empty set $\emptyset$ is an element of $\cA_0$ (since $\Cone_\emptyset = \{0\}$ by definition), hence $U_0$ is the whole of $\CC^4$ and $X_0$ is the Artin stack $[\CC^4/\Cstar]$. The underlying singular variety $\overline{X}_0$ is the 3-fold ordinary double point.
\end{example}

It remains to construct diagram \eqref{eq:K-equivalence}.  Consider the action of $K \times \Cstar$ on $\CC^{m+1}$ defined by the characters 
$\tD_1,\ldots,\tD_{m+1}$ of $K \times \Cstar$, where:
\[
\tD_j = 
\begin{cases}
  D_j \oplus 0 & \text{if $j < m+1$ and $D_j \cdot e \leq 0$} \\
  D_j \oplus ({-D_j} \cdot e) & \text{if $j < m+1$ and $D_j \cdot e > 0$} \\
  0 \oplus 1 & \text{if $j=m+1$}
\end{cases}
\]
Consider the chambers $\widetilde{C}_+$,~$\widetilde{C}_-$, 
and~$\widetilde{C}$ in $(\LL \oplus \ZZ)^\vee \otimes \RR$ 
that contain, respectively, the stability conditions
\begin{align*}
  \tomega_+ = (\omega_+,1) &&
  \tomega_- = (\omega_-,1) && \text{and} && 
  \tomega = (\omega_0, - \varepsilon)
\end{align*}
where $\varepsilon$ is a very small positive real number.  
Let $\tX$ denote the toric Deligne--Mumford stack 
defined by the stability condition $\tomega$.  Lemma~6.16 in \cite{Coates--Iritani--Jiang} gives that:
\begin{enumerate}
\item The toric Deligne--Mumford stack corresponding 
  to the chamber $\widetilde{C}_+$ is $X_+$.
\item The toric Deligne--Mumford stack corresponding 
  to the chamber $\widetilde{C}_-$ is $X_-$.
\item There is a commutative diagram as in \eqref{eq:K-equivalence}, where:
  \begin{itemize}
  \item $f_+ \colon \tX \to X_+$ is a toric blow-up, 
    arising from the wall-crossing from $\widetilde{C}$ to $\widetilde{C}_+$; 
    and
  \item $f_- \colon \tX \to X_-$ is a toric blow-up, 
    arising from the wall-crossing from $\widetilde{C}$ to $\widetilde{C}_-$.
  \end{itemize}
\end{enumerate}

\begin{example}\label{ex:conifold4}
Let us spell out this construction for the Atiyah flop (that is, we continue our Example~\ref{ex:conifold3}). We consider an action of $(\Cstar)^2$ on $\CC^5$ with weight matrix
$$\begin{pmatrix} 1 & 1 & -1 & -1 & 0 \\ -1 & -1 & 0 & 0 & 1 \end{pmatrix} $$
(The columns of this matrix are the characters $\tD_1, ..., \tD_5$.) The space of stability conditions $\RR^2=\{(s_1, s_2)\}$ is partitioned into three chambers:
$$\widetilde{C}_+ = \{s_1>0, s_1+s_2>0\},\hspace{1cm} 
\widetilde{C}_- = \{s_1<0, s_2>0\},\hspace{1cm} \mbox{and}\hspace{1cm}
\widetilde{C} = \{s_1+s_2<0, s_2<0\} $$
The walls are the rays spanned by the characters $\tD_i$. 

If we choose a stability condition $\omega$ lying in $\widetilde{C}_+$ or $\widetilde{C}_-$ (such as $\omega=\tomega_+$ or $\omega=\tomega_-$) then any anticone $I\in \cA_\omega$ has to have $5\in I$; consequently the semi-stable locus $U_\omega$ is contained in the open set $\{z_5\neq 0\}\subset \CC^5$. However, the stack $[\{z_5\neq 0\} / (\Cstar)^2]$ is canonically equivalent to the stack $X_0=[\CC^4/\Cstar]$, so for these stability conditions the GIT problem reduces to the previous one. This is why stability conditions in $\widetilde{C}_+$ produce $X_+$, and stability conditions in $\widetilde{C}_-$ produce $X_-$.

Now consider the stability condition $\tomega$ from the chamber $\widetilde{C}$. The anticones $I\in \cA_\tomega$ are the subsets of $\{1,...,5\}$ such that $I\cap \{1,2\}\neq \emptyset$ and $I\cap \{3,4\}\neq \emptyset$. Consequently the semi-stable locus for $\tomega$ is the open set:
$$U_{\tomega}= \left\{  (z_1,z_2)\neq(0,0), \; (z_3, z_4)\neq (0,0)    \right\} \; \subset \CC^5 $$
Then $\tilde{X}$ is the total space of the line-bundle $\cO(-1,-1)$ over $\PP^1\times\PP^1$. This is the common blow-up of $X_+$ and $X_-$.
\end{example}

\section{The Fourier--Mukai Functor is a Derived Equivalence} 
\label{sec:derived_equivalence}

Let $\varphi \colon X_+ \dashrightarrow X_-$ be a crepant birational transformation between toric Deligne--Mumford stacks which arises from a toric wall-crossing, and let:
\begin{equation}
  \label{eq:K-equivalence_again}
  \begin{aligned}
    \xymatrix{
      & \widetilde{X} \ar[ld]_{f_+} \ar[rd]^{f_-} \\
      X_+  \ar@{-->}[rr]^{\varphi}  && X_- }
  \end{aligned}
\end{equation}
be the $K$-equivalence constructed in \S\ref{sec:GIT_variation}.  Recall that $X_+ = \big[U_{\omega_+}/K\big]$, $X_- = \big[U_{\omega_-}/K\big]$ where $U_{\omega_\pm}$ are open subsets of $\CC^m$ and 
$K = (\Cstar)^r$ acts on $\CC^m$ via a homomorphism 
$K \to T$ with finite kernel. 
Set $Q = \big[T/K\big]$, so that $X_+$ and $X_-$ carry effective actions of $Q$.  The maps $f_\pm$ in \eqref{eq:K-equivalence_again} are $Q$-equivariant.  In this section we show that the Fourier--Mukai functor:
\begin{align*}
  \FM \colon D^b_Q(X_-) \to D^b_Q(X_+) && \FM := (f_+)_\star (f_-)^\star
\end{align*}
is an equivalence of categories.  This generalizes a theorem due to Kawamata \cite[Theorem 4.2]{Kawamata}, by considering the $Q$-equivariant, rather than the non-equivariant, derived category.

To prove that the Fourier--Mukai transform gives an equivarant derived equivalence, we will use the theory developed by Halpern-Leistner~\cite{Halpern-Leistner} and Ballard--Favero--Katzarkov~\cite{Ballard--Favero--Katzarkov} which relates derived categories to variation of GIT. Note that the $Q$-equivariant derived category of $X_\pm$ is just the derived category of the stack $\big[X_\pm/Q\big] = \big[U_{\omega_\pm}/T\big]$, and that $\big[U_\pm/T\big]$ both sit as open substacks of $\big[\CC^m/T\big]$. The work of Halpern-Leistner and Ballard--Favero--Katzarkov allows us to find a (non-unique) subcategory:
\[
\mathbf{G} \subset \Dstack{\CC^m}{T} 
\]
which is equivalent, under the restriction functors, to both $D^b_Q(X_-)$ and $D^b_Q(X_+)$. By inverting the first equivalence we get an equivalence:
\[
\mathbb{GR} \colon D^b_Q(X_-) \stackrel{\sim}{\longrightarrow} D^b_Q(X_+)
\]
The notation $\mathbb{GR}$ here refers to the `grade-restriction rules' which define the subcategory $\mathbf{G}$. We will show that $\mathbb{GR}$ and $\FM$ are the same functor, hence proving that the $\FM$ is an equivalence. 

\begin{remark}
  The result that $\mathbb{GR}=\FM$ is stated in \cite[\S3.1]{Halpern-Leistner--Shipman}, and a sketch proof is given. We did not find the sketch entirely satisfactory, and so give a complete proof here.  (Also Halpern-Leistner--Shipman treat only the non-equivariant case, but this is a minor point.)
\end{remark}

\subsection{Grade-Restriction Rules}\label{sect:GRR}

The theory we need was developed by Halpern-Leistner~\cite{Halpern-Leistner} and Ballard--Favero--Katzarkov~\cite{Ballard--Favero--Katzarkov} independently; we will quote the former.  We consider only smooth spaces acted on by tori, this simplifies the theory considerably.  Let $M$ be a smooth variety carrying an action of a torus $G$. A \emph{Kempf--Ness stratum} (henceforth \emph{KN-stratum}) consists of the following data:
\begin{itemize}
\item A 1-parameter subgroup $\lambda\subset G$.  \item A connected component $Z$ of the fixed locus $M^\lambda$. We let $i_Z: Z\hookrightarrow M$ denote the inclusion.  \item The associated \emph{blade}:
  \[
  S = \left\{ y\in M;\;\; \lim_{t\to \infty} \lambda(t) (y) \in Z \right\}
  \]
  We require that $S$ is closed in $M$.
\end{itemize}
Both $Z$ and $S$ are automatically smooth, and a theorem of Bia\l ynicki--Birula implies that $S$ is a locally trivial bundle of affine spaces over $Z$.  The fixed component $Z$ is automatically closed in $M$, but $S$ need not be; thus the requirement that $S$ be closed in $M$ is non-trivial. To a KN-stratum we associate the numerical invariant:
\[
\eta := \mbox{weight}_\lambda \left( \det(N_{S/M})|_Z\right)
\]
From the definition of $S$ we have that $\eta$ is a non-negative integer. Now pick any integer $k$, and define the subcategory
\[
\mathbf{G}_k \subset \Dstack{M}{G}
\]
to be the full subcategory consisting of objects $\cE$ that obey the following \emph{grade-restriction rule}:
\begin{equation}\label{eq:GRR}
\mbox{the homology sheaves of } Ri_Z^\star\cE \mbox{ have }\lambda \mbox{ weights lying in the interval } [k, k+\eta). 
\end{equation}
The main result of~\cite{Halpern-Leistner}, Theorem~3.35 there, is that for any $k$ the restriction functor gives an equivalence:
\[
\xymatrix{
  \mathbf{G}_k \ar[r]^-\sim & \Dstack{M \setminus S}{G} 
}
\]
A \emph{KN-stratification} is a sequence $(\lambda_0, Z_0, S_0), ..., (\lambda_n, Z_n, S_n)$ such that each triple $(\lambda_i, Z_i, S_i)$ is a KN-stratum in the space $M \setminus \bigcup_{j<i} S_j$. If we pick an integer $k_i$ for each stratum then we can define a subcategory:
\[
\mathbf{G}_{k_\bullet} \subset \Dstack{M}{G}
\]
by imposing a grade-restriction rule on each locally closed subvariety $Z_i\subset M$. By recursively applying the previous result we have~\cite[Theorem 2.10]{Halpern-Leistner} that $\mathbf{G}_{k_\bullet}$ is equivalent to the derived category of:
\[
\Big[\Big(M \setminus \bigcup_i S\Big)\big/G\Big]
\]
If $M$ is semi-projective and $M^{ss}$ is the semi-stable locus for some stability condition, then Kempf and Ness showed that we can construct a KN-stratification with $M \setminus \bigcup_i S_i = M^{ss}$.  Thus the subcategory $\mathbf{G}_{k_\bullet}$ provides a way to lift the derived category of the GIT quotient $\big[M^{ss}/G\big]$ into the derived category of the ambient Artin stack $\big[M / G\big]$.

Next we explain how to apply this theory to find the derived equivalence
\[
\xymatrix{
  \mathbb{GR} \colon D^b_Q(X_-) \ar[r]^-\sim &  D^b_Q(X_+)
}
\]
following~\cite[\S4.1]{Halpern-Leistner}. In~\S\ref{sec:GIT_variation} above we introduced open subsets of $\CC^m$
\begin{align*}
U_+ = \bigcup_{I\in \cA_+} (\Cstar)^I \times \CC^{\overline{I}} && 
U_0 = \bigcup_{I\in \cA_0} (\Cstar)^I \times \CC^{\overline{I}} &&
U_- = \bigcup_{I \in \cA_-} (\Cstar)^I \times \CC^{\overline{I}}
\end{align*}
with $X_+ = \big[ U_+/ K \big]$ and $X_- = \big[ U_-/ K \big]$. The set $U_0$ is the semi-stable locus for a stability condition $\omega_0$ that lies on the wall $W$ between $X_+$ and $X_-$. Recall that $e$ is a primitive normal vector to $W$; this defines a 1-parameter subgroup of $K$ which `controls the wall-crossing'.   Set: 
\begin{align*} 
M_{\pm} & = \{ i \in \{1,\dots,m\} : \pm D_i\cdot e >0 \} & M_{\geq 0} &= M_0\sqcup M_+\\ 
M_0  & = \{ i\in \{1,\dots,m\} : D_i \cdot e = 0\} & M_{\leq 0} &= M_0\sqcup M_-
\end{align*} 
Our assumptions imply that both $M_+$ and $M_-$ are non-empty. 
The fixed-point locus, attracting subvariety, and repelling subvariety for $e$ are $\CC^{M_0}$, $\CC^{M_{\leq 0}}$, and $\CC^{M_{\geq 0}}$ respectively. It is clear\footnote{See e.g.~\cite[Lemma~5.2]{Coates--Iritani--Jiang}.} that $U_\pm \subset U_0$ and that:
\begin{align*}
  U_+ = U_0 \setminus \big(\CC^{M_{\leq 0}}\cap U_0\big) &&
  U_- = U_0 \setminus \big(\CC^{M_{\geq 0}} \cap U_0\big)
\end{align*}
Set:
\begin{align*}
  Z = U_0 \cap \CC^{M_0} && S_+ = U_0\cap \CC^{M_{\geq 0}} && S_-= U_0\cap \CC^{M_{\leq 0}}
\end{align*}
Both $(e, Z, S_-)$ and $(-e, Z, S_+)$ define KN-strata inside $U_0$. The numerical invariants associated to these two strata are
\begin{align*}
  \eta_+ = \sum_{i\in M_+} D_i\cdot e && \text{and} && \eta_- = {-\sum_{i\in M_-}} D_i\cdot e
\end{align*}
respectively. The crepancy condition gives $\eta_+ = \eta_-$.  Now define a full subcategory $\mathbf{G}\subset \Dstack{U_0}{T}$ consisting of objects $\cE$ such that the $e$ weights of the homology sheaves of $Ri_Z^\star \cE$ lie in the interval $[0, \eta_+)$; this is the grade-restriction rule \eqref{eq:GRR}. Then $\mathbf{G}$ is equivalent, under the restriction functor, to:
\[
\Dstack{U_0 \setminus S_-}{T} = D^b_Q(X_+) 
\]
However, this grade restriction rule is the same thing as requiring the $(-e)$ weights of the homology of $Ri_Z^\star \cE$ to lie in the interval $[-\eta_-\!+\!1, 1)$, so $\mathbf{G}$ is also equivalent to:
\[
\Dstack{U_0 \setminus S_+}{T} = D^b_Q(X_-)
\]
After inverting the latter equivalence we obtain the required equivalence $\mathbb{GR}$.

If we wish, we can pick a KN-stratification for the complement of $U_0$ in $\CC^m$ and use grade-restriction rules to lift $\Dstack{U_0}{T}$ into $\Dstack{\CC^m}{T}$, thus lifting $\mathbf{G}$ to a category defined on the larger stack. This produces the same equivalence $\mathbb{GR}$.

\subsection{Derived Categories of Blow-Ups and Variation of GIT}\label{sect:blowupsGIT}

Given a blow-up $f \colon \widetilde{X} \to X$, there are adjoint functors
\begin{align*}
  f^\star \colon D^b(X) \to D^b(\widetilde{X}) && f_\star \colon  D^b(\widetilde{X}) \to D^b(X).
\end{align*}
In this section we construct these functors using grade-restriction rules and variation of GIT, in a quite general setting.  

Suppose that $X$ is a Deligne--Mumford stack, $E$ is a vector bundle on $X$, and that $Z \subset X$ is a connected substack defined by the vanishing of a regular section $\sigma$ of $E$.  Let $\widetilde{X} := \Bl_Z X$ be the blow-up of $X$ with center $Z$.  Consider the total space of the bundle $E\oplus \cO_X$, and equip it with a $\Cstar$ action having weights $(-1, 1)$. Now consider the $\Cstar$-invariant subspace:
\[
M = \left\{ (v,z) : \text{$v \in E_x$, $z \in \CC$ such that $z v = \sigma(x)$} \right\} \; \subset E\oplus \cO_X
\]
The stack $\big[M \big/\Cstar \big]$ contains both $X$ and $\widetilde{X}$ as open substacks, and sits in a diagram:
\[
\xymatrix{
  &E \ar[dr] && \cO_{\PP(E)}(-1) \ar@{=}[r] \ar[dl]& \Bl_X E\\
  && \big[E \oplus \cO_X \big/\Cstar \big] \\
  X \ar@{=}[r] &\Gamma(\sigma) \ar[uu] \ar[dr] & & \widetilde{X} \ar@{=}[r] \ar[uu] \ar[dl] & \Bl_Z X \\
  && \big[M \big/\Cstar \big] \ar[uu]
  }
\]
where all arrows are inclusions and $\Gamma(\sigma)$ denotes the graph of $\sigma$. The fixed locus $M^{\Cstar}$ is isomorphic to $Z$, the attracting subvariety $S_-$ is isomorphic to the total space of $E \big|_Z$, and the repelling subvariety $S_+$ is isomorphic to the total space of $\cO_X \big|_Z$. Let  $U_\pm = M \setminus S_\mp$; these are the semi-stable loci for the two possible stability conditions. We have a commuting diagram:
\begin{equation}
  \label{eq:vGIT_over_X}
  \begin{aligned}
    \xymatrix{
      & & \big[M \big/ \Cstar\big] \ar[dd]^\pi \\
      X \ar@{=}[r] & \big[U_+ \big/ \Cstar\big] \ar@{^{(}->}[ur]^{i_+} \ar@{=}[dr] & & 
      \big[U_- \big/ \Cstar\big] \ar@{_{(}->}[ul]_{i_-}  \ar@{=}[r] \ar[ld] & \widetilde{X} \ar@/^/[lld]^f  \\
      & & X}
  \end{aligned}
\end{equation}
where $i_\pm$ are the inclusions and $\pi$ is induced by the vector bundle projection map $E \oplus \cO_X \to X$.  Thus the blow-up $f \colon \widetilde{X} \to X$ arises from variation of GIT, and it does so relative to $X$.

We now apply the results discussed in the previous section. Let $i_Z \colon Z \to M$ denote the inclusion. For each stability condition we have a single KN-stratum, namely $(\Cstar, Z, S_\pm)$. The numerical invariants are:
\begin{align*}
  \eta = \mbox{weight}(\cO_X|_Z) = 1 && \text{and} && 
\widetilde{\eta} = -\mbox{weight}(\det E|_Z) =  \rank E.
\end{align*}
Hence we define full subcategories:
\[
\mathbf{H} \subset \widetilde{\mathbf{H}} \subset \Dstack{M}{\Cstar}
\]
using the grade-restriction rule \eqref{eq:GRR}, where for $\mathbf{H}$ we require the weights to lie in the interval $[0,1)$ and for $\widetilde{\mathbf{H}}$ we require the weights to lie in $[0, \rank E)$. Then $\mathbf{H}$ and $\widetilde{\mathbf{H}}$ are equivalent, via the restrictions $i_+$ and $i_-$, to the derived categories of $X$ and $\tilde{X}$ respectively.

\begin{lemma}\label{lem:blowupsandGRR}\ 
\begin{enumerate}
\item The composition:
  \[
  \xymatrix{ D^b(X) \ar[r]^(.6){(i_+^\star)^{-1}} &  \mathbf{H} \ar[r]^(.4){i_-^\star}  &D^b(\tilde{X}) }
  \]
  is equal to the pull-up functor $f^\star$.
\item The composition
  \[
  \xymatrix{ D^b(\tilde{X}) \ar[r]^(.6){(i_-^\star)^{-1}} & \widetilde{\mathbf{H}} \ar[r]^(.4){i_+^\star}  &D^b(X) }
  \]
  is equal to the push-down functor $f_\star$.
\end{enumerate}
\end{lemma}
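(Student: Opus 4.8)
The plan is to verify each identity by unwinding the grade-restriction machinery and comparing the two functors directly on objects, using the explicit geometry of the diagram~\eqref{eq:vGIT_over_X}. The key observation is that the category $\mathbf{H}$ (resp. $\widetilde{\mathbf{H}}$) is designed so that restriction along $i_+$ (resp. $i_-$) is an equivalence onto $D^b(X)$ (resp. $D^b(\widetilde X)$); thus $(i_+^\star)^{-1}$ (resp. $(i_-^\star)^{-1}$) is the operation of \emph{lifting} an object of $D^b(X)$ (resp. $D^b(\widetilde X)$) to the unique grade-restricted complex on $[M/\Cstar]$, and the content of the lemma is to identify this lift. For part~(1): given $\cF \in D^b(X)$, we have $X = [U_+/\Cstar]$, and $\pi \colon [M/\Cstar] \to X$ restricts to the identity on $[U_+/\Cstar]$ (because the $z$-coordinate is nowhere zero on the graph of $\sigma$, so $U_+ \cong X$ via $\pi$). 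Hence $\pi^\star \cF$ is an object of $\Dstack{M}{\Cstar}$ whose restriction along $i_+$ is $\cF$. One then checks that $\pi^\star\cF$ satisfies the grade-restriction rule for $\mathbf{H}$: its restriction $Ri_Z^\star \pi^\star \cF = (\pi|_Z)^\star(\cF|_Z)$ has $\Cstar$-weight $0$ (the $\Cstar$-action on the base $X$, and hence on $Z$, is trivial), which lies in $[0,1)$; so $\pi^\star\cF$ \emph{is} the lift. Therefore the composition in~(1) is $i_-^\star \circ \pi^\star$. Finally, restricting diagram~\eqref{eq:vGIT_over_X} to $\widetilde X = [U_-/\Cstar]$, the composite $[U_-/\Cstar] \hookrightarrow [M/\Cstar] \xrightarrow{\pi} X$ is precisely the blow-down map $f$, so $i_-^\star \circ \pi^\star = f^\star$, as claimed.

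For part~(2) the same strategy applies with the roles of $\pm$ partly swapped, but now the lift is more subtle and this is where the real work lies. Given $\cG \in D^b(\widetilde X) = \Dstack{U_-}{\Cstar}$, we need the unique complex on $[M/\Cstar]$ restricting to $\cG$ on $U_-$ and satisfying the grade-restriction rule for $\widetilde{\mathbf{H}}$, namely $Ri_Z^\star(-)$ has weights in $[0,\rank E)$. One candidate is $Rj_\star \cG$ where $j \colon [U_-/\Cstar] \hookrightarrow [M/\Cstar]$ is the open inclusion — but in general this will not be grade-restricted and may not even be bounded; the correct lift is obtained from $Rj_\star\cG$ (or from a Koszul-type resolution realizing the blow-down) by truncating the weights at $Z$, and one must check that after this truncation $i_+^\star$ of the result is $Rf_\star\cG$. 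Concretely, $[U_+/\Cstar] = X$ and $\pi|_{U_+}$ is an isomorphism, so $i_+^\star$ of any lift $\widetilde{\cG}$ is $\pi_\star(j_! \text{ or } j_\star\text{-extension})$ restricted back to $X$; since $\pi \circ j = f$ on $U_-$, one expects $i_+^\star \widetilde{\cG} = Rf_\star \cG$ provided the higher direct images along the $\PP^{\rank E - 1}$-fibres of the exceptional divisor are killed exactly by the grade-restriction window $[0,\rank E)$. This is the standard ``window = full exceptional collection on the fibre'' phenomenon: $R\pi_\star \cO(d)$ on the $\PP^{\rank E-1}$-bundle vanishes precisely for $d \in \{-1, \dots, -(\rank E - 1)\}$, equivalently for weights in a length-$\rank E$ window, which is why $\widetilde\eta = \rank E$.

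The main obstacle, then, is part~(2): one has to match the abstract ``lift and restrict'' operation with the concrete derived pushforward $Rf_\star$, and the cleanest way is probably to compute both sides on a generating set — e.g. on complexes of the form $Rf_\star$ applied to line bundles $\cO_{\widetilde X}(\ell \mathcal{D})$ twisted off the exceptional divisor $\mathcal{D}$, or dually on objects pulled back from $X$ twisted by $\cO(\ell \mathcal{D})$ for $\ell \in \{0,\dots,\rank E - 1\}$, which form a ``relative tilting'' collection for $f$. On such generators the grade-restriction window is saturated, the lift is visibly $\pi^\star(\text{pushforward})$ up to a twist, and the comparison $i_+^\star \circ (i_-^\star)^{-1} = Rf_\star$ reduces to the projection formula together with the vanishing $R\pi_\star\cO_{\PP(E)}(-d) = 0$ for $1 \le d \le \rank E - 1$. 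Boundedness and the fact that these generators split-generate $D^b(\widetilde X)$ (standard for blow-ups, by Orlov's blow-up formula or by the semiorthogonal decomposition induced by the KN-stratification) then finish the argument; part~(1) needs none of this, being just the identification of the open immersion $U_+ \cong X$ with the locus where $\pi$ is an isomorphism. I would also remark that both statements are compatible with, and in the toric application will be combined with, the $Q$-equivariant refinement, since every stack and morphism above carries the residual torus action and the grade-restriction rules are weight conditions that make sense equivariantly.
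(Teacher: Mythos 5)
Part (1) of your proposal is correct and is essentially the paper's argument: $\pi^\star\cF$ restricts to $\cF$ on $[U_+/\Cstar]\cong X$, satisfies the weight-$0$ grade-restriction rule along $Z$, hence realizes $(i_+^\star)^{-1}$, and $i_-^\star\pi^\star=f^\star$ because $\pi\circ i_-=f$.

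Part (2) is where your proposal has a genuine gap. You correctly identify the difficulty --- one must describe the grade-restricted lift of $\mathcal{G}\in D^b(\tilde{X})$ to $\widetilde{\mathbf{H}}$ and show its restriction to $U_+$ is $Rf_\star\mathcal{G}$ --- but you never actually carry this out: the ``truncation of the weights at $Z$'' of $Rj_\star\mathcal{G}$ is not defined or shown to exist as a functor landing in $\widetilde{\mathbf{H}}$, the claim that $i_+^\star$ of the lift equals $\pi_\star$ of some extension is asserted rather than proved, and the reduction to a generating set of twists of $\cO(\ell\mathcal{D})$ leaves the key compatibility (that on generators the lift is visibly $\pi^\star(\cdot)$ up to twist) unverified. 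This route could probably be pushed through, but as written it is a plan, not a proof. The paper avoids all of this with a short adjunction argument: by Halpern-Leistner's Theorem~3.29, for $\cE\in\mathbf{H}$ and $\cF\in\widetilde{\mathbf{H}}$ restriction gives a quasi-isomorphism $R\Hom_{[M/\Cstar]}(\cE,\cF)\xrightarrow{\sim}R\Hom_X(i_+^\star\cE,i_+^\star\cF)$, which says precisely that $(i_+^\star)^{-1}\circ i_+^\star\colon\widetilde{\mathbf{H}}\to\mathbf{H}$ is right adjoint to the inclusion $\mathbf{H}\hookrightarrow\widetilde{\mathbf{H}}$. Under the identifications $i_+^\star$, $i_-^\star$ the inclusion is $f^\star$ by part (1), so its right adjoint is $f_\star$ by uniqueness of adjoints --- no computation of lifts, generators, or $R\pi_\star\cO(-d)$ vanishing is needed. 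If you want to salvage your approach, you would need to either prove the fully-faithfulness statement yourself (at which point the adjunction argument is available anyway) or genuinely verify the identity on generators, including the boundedness and split-generation claims you currently only gesture at.
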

\begin{proof}
  (1) We use the diagram \eqref{eq:vGIT_over_X}. If $\cF$ is any sheaf on $X$, then $\pi^\star \cF\big|_Z$ is of $\Cstar$-weight zero, and so $\pi^\star\cF\in \mathbf{H}$. Moreover, since $i_+^\star \pi^\star$ is the identity functor, we must have that $\pi^\star$ is an embedding and
  \[
  \pi^\star ( D^b(X) )= \mathbf{H}
  \]
  with $\pi^\star= (i_+^\star)^{-1}$. Now the statement follows, since $i_-^\star \pi^\star=f^\star$.

  (2) Let $\cE\in \mathbf{H}$ and $\cF\in \widetilde{\mathbf{H}}$. By \cite[Theorem 3.29]{Halpern-Leistner}, restriction gives a quasi-isomorphism
  \[
  R\Hom_{[M/\Cstar]} (\cE, \cF) \stackrel{\sim}{\longrightarrow} R\Hom_{X}(i_+^\star \cE, i_+^\star \cF) 
  \]
  In other words, the composition:
  \[
  \xymatrix{ \widetilde{\mathbf{H}} \ar[r]^(.4){i_+^\star} &  D^b(X) \ar[r]^(.6){(i_+^\star)^{-1}} &   \mathbf{H}}
  \] 
  is the right adjoint to the inclusion $\mathbf{H} \hookrightarrow \widetilde{\mathbf{H}}$. If we identify $\mathbf{H}$ and $\widetilde{\mathbf{H}}$ with $D^b(X)$ and $D^b(\tilde{X})$ using $i_+^\star$ and $i_-^\star$ respectively, then the inclusion $\mathbf{H} \hookrightarrow \widetilde{\mathbf{H}}$ is identified with $f^\star$ by (1), and so its right adjoint must coincide with $f_\star$.  
\end{proof}

\subsection{The Fourier-Mukai Functor and Variation of GIT} 

In this section we complete the proof that the Fourier--Mukai functor $\mathbb{FM}$ arising from the diagram \eqref{eq:K-equivalence_again} is a derived equivalence, by showing that it coincides with the `grade-restriction' derived equivalance $\mathbb{GR}$.

\subsubsection{Variation of GIT Setup}

We saw in \S\ref{sec:GIT_variation} that $X_+$,~$X_-$ and~$\tilde{X}$ can be constructed using a single GIT problem. These quotients correspond respectively to chambers which we denoted $\widetilde{C}_+$,~$\widetilde{C}_-$~and $\widetilde{C}$. Let $W_{+|-}$, $W_{+|\sim}$ and $W_{-|\sim}$ denote the codimension-1 walls between these three chambers, and let $W_0$ be the codimension-2 wall where all three meet. The three codimension-1 walls each define one-parameter subgroups of $K\times \Cstar$, which have fixed loci, repelling subvarieties, and attracting subvarieties as follows.
\begin{align*}
 \text{Wall:}		 &&&     W_{+|-}				 && W_{-|\sim}				 && W_{+|\sim}   \\
 \text{One-parameter subgroup:} &&&		(e,0)				&& (0,1) 					&& (e,1)	\\
 \text{Fixed locus:} &&&	 \CC^{M_0}\times \CC		&&  \CC^{M_{\leq 0}}	&&  \CC^{M_{\geq 0}}\\	
 \text{Repelling subvariety:} &&&	\CC^{M_{\geq 0}} \times \CC 		&& \CC^{M_{\leq 0}} \times \CC		&&  \CC^{M_{\geq 0}}\times \CC  \\
 \text{Attracting subvariety:}&&&	\CC^{M_{\leq 0}} \times \CC		&& \CC^m 					&& \CC^m
\end{align*}
Consider~7 stability conditions as follows: one lying on (the relative interior of) $W_0$, one lying on (the relative interior of) each of the 3 codimension-1 walls, and one lying in each chamber. The semi-stable locus $V_0 \subset \CC^{m+1}$ for a stability condition lying on $W_0$ is the open set in $U_0\times \CC$:
\[
V_0 = \left( 
\bigcup_{I\in \cA_0, I \subset M_0} (\Cstar)^I \times \CC^{\overline{I}}\right) 
\times \CC 
\cup 
\left( \bigcup_{I\in \cA_0, I \not\subset M_0} 
(\Cstar)^I \times \CC^{\overline{I}}\right) \times \Cstar 
\]
where $\overline{I}$ denotes the complement of $I$ in $\{1,2,\dots,m\}$ 
and $U_0$ was defined in Section~\ref{sect:GRR}. 
The semi-stable locus for the other~6 stability conditions are open subsets of $V_0$, as follows:
\begin{center}
  \begin{tabular}{ll}
    \multicolumn{1}{c}{Location of  stability condition} & \multicolumn{1}{c}{Semi-stable locus}   \\
    \hspace{2.25cm}$\widetilde{C}_+$ 	& \hspace{2ex}	$V_+ = V_0\setminus \left((\CC^{M_{\leq 0}}\times \CC)\cup \CC^m\right)	$		\\
    \hspace{2.25cm}$\widetilde{C}_-$ 	& \hspace{2ex}	$V_- = V_0\setminus \left((\CC^{M_{\geq 0}}\times \CC)\cup \CC^m\right)	$				\\
    \hspace{2.25cm}$\widetilde{C}$ 	& \hspace{2ex}	$V_{\sim} = V_0\setminus \left((\CC^{M_{\leq 0}}\times \CC)\cup (\CC^{M_{\geq 0}}\times \CC)\right)$	\\
    \hspace{2.25cm}$W_{+|-}$		& \hspace{2ex}	$V_{+|-} = V_0\setminus \CC^m$		\\
    \hspace{2.25cm}$W_{+|\sim}$ 	& \hspace{2ex}	$V_{+|\sim}= V_0\setminus (\CC^{M_{\leq 0}}\times \CC)$		\\
    \hspace{2.25cm}$W_{-|\sim}$ 	& \hspace{2ex}	$V_{-|\sim} = V_0\setminus (\CC^{M_{\geq 0}}\times \CC)$			
  \end{tabular}
\end{center}
The GIT quotients $\big[V_+ /K\big]$, $\big[V_- /K\big]$, and $\big[V_{\sim} /K\big]$ are $X_+$, $X_-$, and $\tilde{X}$ respectively.

Let $k_i = \max(D_i \cdot e,0)$.  The maps:
\begin{align*}
    \bar{\pi}_- \colon \CC^{m+1} & \to \CC^m & T \times \Cstar & \to T \\
    (x_1,\ldots,x_{m+1}) & \mapsto \big(x_1 x_{m+1}^{k_1},\ldots,x_m x_{m+1}^{k_m}\big) &
  (\theta,\theta') & \mapsto \theta
\end{align*}
induce a morphism $\pi_- \colon \big[\CC^{m+1} / (T \times \Cstar) \big] \to \big[ \CC^m / T \big]$. This morphism maps the subset $V_0$ to the subset $U_0$, and it maps the subset $V_{-|\sim}$ to the subset $U_-$. Thus we have a commutative diagram:
\[ \xymatrix{
  & &    \big[V_{-|\sim}  / (T\times \Cstar) \big]  \ar[dd]^{\pi_-} & \\
  \big[X_-/Q\big] \ar@{=}[r] & \big[V_- / (T \times \Cstar) \big] \ar@{^{(}->}[ur] \ar@{=}[dr] & & 
  \big[V_{\sim} / (T \times \Cstar) \big] \ar@{_{(}->}[ul]  \ar@{=}[r] \ar[ld] & \big[\widetilde{X}/Q\big] \ar@/^/[lld]^{f_-}  \\
  & & \big[X_-/Q\big]}
\]
where $f_-$ is the ($Q$-equivariant) blow-up. Similiarly there is a map $\pi_+$ which sends $V_{+|\sim}$ to $U_+$ and gives a corresponding commutative diagram for $f_+$.  The stack $\big[ V_{+|-} / (T\times \Cstar)\big]$ is isomorphic to $\big[U_0 / T \big]$, via either of $\pi_-$ or $\pi_+$.

\subsubsection{Proof that $\FM$ Coincides With $\mathbb{GR}$}

As discussed, the fact that $\FM$ is an equivalence follows from:
\begin{proposition} The two functors
  \begin{align*}
    \FM \colon D^b_Q(X_-) \longrightarrow D^b_Q(X_+) && \text{and} && 
    \mathbb{GR} \colon D^b_Q(X_-) \longrightarrow D^b_Q(X_+)
  \end{align*} 
  are naturally isomorphic. 
\end{proposition}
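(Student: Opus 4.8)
The plan is to rewrite both $\FM$ and $\mathbb{GR}$ in the common form ``lift an object to a grade-restriction window inside $D^b\big([V_0/(T\times\Cstar)]\big)$ and restrict it to $[X_+/Q]$'', and then to observe that the two lifts are restrictions of the same window object. Recall from the variation of GIT setup of \S\ref{sec:derived_equivalence} that $[X_+/Q]=[V_+/(T\times\Cstar)]$, $[X_-/Q]=[V_-/(T\times\Cstar)]$ and $[\tX/Q]=[V_\sim/(T\times\Cstar)]$ are the GIT quotients for the chambers $\widetilde C_+$, $\widetilde C_-$, $\widetilde C$ of a single VGIT problem on $\CC^{m+1}$, all sitting inside the Artin stack $[V_0/(T\times\Cstar)]$; that $[V_{+|-}/(T\times\Cstar)]\cong[U_0/T]$ is the stack on which the flop window $\mathbf G$ of \S\ref{sect:GRR} was defined; and that $f_\pm\colon\tX\to X_\pm$ are toric blow-ups which, via the open substacks $[V_{\pm|\sim}/(T\times\Cstar)]$, arise from variation of GIT relative to $X_\pm$ in the sense of \S\ref{sect:blowupsGIT}. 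By definition $\mathbb{GR}(\cE)$ is the restriction to $X_+$ of the unique object of $\mathbf G$ restricting to $\cE$ on $X_-$, so it remains to put $\FM$ in this form.

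First I would apply Lemma~\ref{lem:blowupsandGRR} to the two blow-ups. Part~(1), applied to $f_-$ realized as variation of GIT relative to $X_-$ inside $[V_{-|\sim}/(T\times\Cstar)]$, shows that $(f_-)^\star\cE$ equals the restriction to $\tX$ of the unique object $\cE_1$ of the narrow window $\mathbf H_-\subset D^b\big([V_{-|\sim}/(T\times\Cstar)]\big)$ with $\cE_1|_{X_-}=\cE$. Part~(2), applied to $f_+$ inside $[V_{+|\sim}/(T\times\Cstar)]$, shows that $(f_+)_\star\cF$ equals the restriction to $X_+$ of the unique object of the wide window $\widetilde{\mathbf H}_+\subset D^b\big([V_{+|\sim}/(T\times\Cstar)]\big)$ restricting to $\cF$ on $\tX$. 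Hence $\FM(\cE)$ is computed by lifting $\cE$ into $\mathbf H_-$, restricting to $\tX$, lifting that into $\widetilde{\mathbf H}_+$, and restricting to $X_+$.

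The crux is then to produce, for each $\cE\in D^b_Q(X_-)$, a single object $\widehat\cE\in D^b\big([V_0/(T\times\Cstar)]\big)$ with $\widehat\cE|_{X_-}=\cE$ and with $\widehat\cE|_{[V_{-|\sim}/(T\times\Cstar)]}\in\mathbf H_-$, $\widehat\cE|_{[V_{+|\sim}/(T\times\Cstar)]}\in\widetilde{\mathbf H}_+$ and $\widehat\cE|_{[V_{+|-}/(T\times\Cstar)]}\in\mathbf G$. Granting this, the first membership together with Lemma~\ref{lem:blowupsandGRR}(1) identifies $\widehat\cE|_{\tX}$ with $(f_-)^\star\cE$; since $\widehat\cE|_{[V_{+|\sim}/(T\times\Cstar)]}$ lies in $\widetilde{\mathbf H}_+$ and also restricts to $(f_-)^\star\cE$ on $\tX$, it is the window lift of $(f_-)^\star\cE$, so that $\widehat\cE|_{X_+}=(f_+)_\star(f_-)^\star\cE=\FM(\cE)$; and the third membership gives directly $\widehat\cE|_{X_+}=\mathbb{GR}(\cE)$. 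As restriction of derived categories along open immersions of stacks is strictly functorial, $\widehat\cE|_{X_+}$ is a single well-defined object, so $\FM(\cE)\cong\mathbb{GR}(\cE)$; naturality follows because $\widehat\cE$ is the image of $\cE$ under the inverse of the restriction equivalence $\widehat{\mathbf W}\xrightarrow{\ \sim\ }D^b_Q(X_-)$ for a fixed grade-restriction window $\widehat{\mathbf W}\subset D^b\big([V_0/(T\times\Cstar)]\big)$, hence depends functorially on $\cE$. To construct $\widehat{\mathbf W}$ one takes a KN-stratification of $[V_0/(T\times\Cstar)]$ whose open part is $[X_-/Q]$ --- obtained by crossing the codimension-one walls adjacent to $W_0$ --- and fixes grade-restriction integers on its strata.

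The main obstacle is verifying the three memberships, i.e.\ that the integers defining $\widehat{\mathbf W}$ can be chosen so that restriction to each intermediate open substack lands in the prescribed window. This is a finite numerical computation: one enumerates the KN-strata near the codimension-two wall $W_0$ --- their one-parameter subgroups are $(e,0)$, $(0,1)$ and $(e,1)$, and the weights $\eta$ are read off from the characters $\tD_j$ exactly as in \S\ref{sec:GIT_variation} --- and checks that $\widehat{\mathbf W}$ refines simultaneously to the narrow window $\mathbf H_-$ (width $1$) over $[V_{-|\sim}/(T\times\Cstar)]$, to the wide window $\widetilde{\mathbf H}_+$ (width $\rank$ of the exceptional bundle of $f_+$) over $[V_{+|\sim}/(T\times\Cstar)]$, and to $\mathbf G$ (with interval $[0,\eta_+)$) over $[U_0/T]\cong[V_{+|-}/(T\times\Cstar)]$. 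The crepancy hypothesis $\sum_{i=1}^m D_i\in W$, in the guise of the equality $\eta_+=\eta_-$, is exactly what makes these intervals compatible --- it is already what makes $\mathbf G$ restrict isomorphically to both $D^b_Q(X_\pm)$ --- so the numerology closes up. The remaining verifications --- the identifications $[X_\pm/Q]=[V_\pm/(T\times\Cstar)]$ and $[V_{+|-}/(T\times\Cstar)]\cong[U_0/T]$, and that $f_\pm$ satisfy the hypotheses of Lemma~\ref{lem:blowupsandGRR} --- are routine manipulations of the diagrams in \S\ref{sec:GIT_variation} and \S\ref{sec:derived_equivalence}.
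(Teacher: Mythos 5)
Your overall architecture is the same as the paper's: both arguments use Lemma~\ref{lem:blowupsandGRR} to rewrite $(f_-)^\star$ and $(f_+)_\star$ as ``invert a window equivalence, then restrict'' through subcategories of $\Dstack{V_{-|\sim}}{T\times\Cstar}$ and $\Dstack{V_{+|\sim}}{T\times\Cstar}$, and both conclude by exhibiting one subcategory of $\Dstack{V_0}{T\times\Cstar}$ whose restrictions to the intermediate open substacks land simultaneously in the blow-up windows and in (the image of) the flop window $\mathbf{G}$. The gap is at precisely that last step, which you defer to ``a finite numerical computation''. That computation is the entire content of the proposition, and the construction you propose for the master window --- a KN-window $\widehat{\mathbf{W}}$ attached to a KN-stratification of $V_0\setminus V_-$ --- does not reduce to reading off the invariants $\eta$. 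A KN-window imposes weight bounds on the \emph{fixed loci} of the strata being removed (in any ordering these are two of $\CC^{M_0}\times\CC$, $\CC^{M_{\leq 0}}$, $\CC^{M_{\geq 0}}$), whereas the three memberships you need involve all three of the loci $\CC^{M_{\geq 0}}$ (for the wide window over $V_{+|\sim}$), $\CC^{M_{\leq 0}}$ (for the narrow window over $V_{-|\sim}$), and $(\pi_-)^{-1}(\CC^{M_0})$ (for $\mathbf{G}$). At least one of these strictly contains the loci on which $\widehat{\mathbf{W}}$ constrains weights, and weight bounds on $Ri_Z^\star\cE$ for a smaller fixed locus $Z$ do not control the weights on a larger subvariety containing it: weight supports of the homology sheaves are merely closed, and derived restriction introduces further Tor terms. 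So no choice of grade-restriction integers makes the check go through by itself; crepancy ($\eta_+=\eta_-$) makes the interval \emph{lengths} compatible but does not supply the missing control.

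The idea that fills this gap in the paper is to take the master subcategory to be $(\pi_-)^\star\mathbf{G}$, the pullback of the flop window along the projection $\pi_-\colon\big[V_0/(T\times\Cstar)\big]\to\big[U_0/T\big]$, rather than a KN-window defined directly on $V_0$. Membership in (the image of) $\mathbf{G}$ is then automatic; the $(e,1)$-weights on $\CC^{M_{\geq 0}}\cap V_0$ lie in $[0,d)$ because $\CC^{M_{\geq 0}}\subset(\pi_-)^{-1}(\CC^{M_0})$ and $\mathbf{G}$ bounds the $e$-weights there; and the $(0,1)$-weights on $\CC^{M_{\leq 0}}\cap V_0$ vanish for \emph{any} object pulled back along $\pi_-$. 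Those two one-line observations replace your ``numerology closes up'', and they use the pullback structure of the object in an essential way. Without this (or an equivalent substitute), your argument establishes the strategy but not the proposition.
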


\begin{proof}
Consider the following poset of inclusions:
\begin{equation}\label{eq:poset}
\begin{aligned}
 \xymatrix{ & & V_0  & & \\
& V_{-|\sim} \ar[ur]  & & V_{+|\sim} \ar[ul] & \\
V_- \ar[ur] & & V_\sim\ar[ul]\ar[ur]  & & V_+\ar[ul]   }
\end{aligned}
\end{equation}
Passing to (equivariant) derived categories, we get a corresponding commuting diagram of restriction functors. We will prove our proposition by lifting the categories along the bottom line up the diagram, using grade-restriction rules.

 Let us denote by $d$ the positive integer:
\[
d = \sum_{i\in M_+} D_i \cdot e = {- \sum_{i\in M_-}} D_i \cdot e 
\]
We begin by considering $V_+$ and $V_\sim$ as open subsets of $V_{+|\sim}$. They are the complements, respectively, of the KN stratum:
\[
\Big( (e,1), \;\; \CC^{M_{\geq 0}} \cap V_{+|\sim}, \;\; \CC^m \cap V_{+|\sim} \Big)
\]
which has numerical invariant $\eta = 1$, and the KN stratum:
\[
\Big( (-e,-1), \;\;  \CC^{M_{\geq 0}}\cap V_{+|\sim}, \;\; (\CC^{M_{\geq 0}}\times \CC) \cap V_{+|\sim} \Big)
\]
which has numerical invariant $\eta = d$.  Hence we define subcategories 
\[
\mathbf{F}\subset \widetilde{\mathbf{F}} \subset \Dstack{V_{+|\sim}}{T\times \Cstar} 
\]
by imposing the grade-restriction rule \eqref{eq:GRR} on the subvariety $ \CC^{M_{\geq 0}}\cap V_{+|\sim}$, where for $\mathbf{F}$ we require that the $(e,1)$-weights lie in the interval $[0,1)$, and for $\widetilde{\mathbf{F}}$ we require that the $(e,1)$-weights lie in the interval $[0, d)$. Then $\mathbf{F}$ is equivalent under restriction to $D^b_Q(X_+)$, and $\widetilde{\mathbf{F}}$ is equivalent under restriction to $D^b_Q(\tilde{X})$. Using the map $\pi_+$, and arguing exactly as in Lemma \ref{lem:blowupsandGRR}, we have a commuting triangle
$$\xymatrix{ & \widetilde{\mathbf{F}} \ar[dl]_{\simeq} \ar[dr] & \\
D^b_Q(\tilde{X}) \ar[rr]^{(f_+)_\star} & & D^b_Q(X_+) } $$
where the diagonal maps are the restriction functors.

Now view $V_-$ as an open subset of $V_{-|\sim}$, where it is the complement of the KN-stratum:
\[
\Big( (0,1), \;\; \CC^{M_{\leq 0}}\cap V_{-|\sim}, \;\; \CC^m\cap V_{-|\sim} \Big)
\]
which has numerical invariant $\eta = 1$.  Hence we define a subcategory
\[
\mathbf{H}\subset \Dstack{V_{-|\sim}}{T\times \Cstar}
\]
using the grade restriction rule on the subvariety $\CC^{M_{\leq 0}}\cap V_{-|\sim}$ and requiring the $(0,1)$-weights to lie in the interval $[0,1)$. Then $\mathbf{H}$ is equivalent under restriction to $D^b_Q(X_-)$ (there is also a larger subcategory $\widetilde{\mathbf{H}}$ which is equivalent to $D^b_Q(\tilde{X})$, but we will not need this). Using the map $\pi_-$ and the proof of Lemma \ref{lem:blowupsandGRR} again, we have commuting triangle
$$\xymatrix{ & \mathbf{H} \ar[dl]_{\simeq} \ar[dr] & \\
D^b_Q(X_-) \ar[rr]^{(f_-)^\star} & & D^b_Q(\tilde{X}) } $$
where the diagonal maps are the restriction functors.

Next we recall the definition of the functor $\mathbb{GR}$ from Section \ref{sect:GRR}. It is constructed by lifting $D^b_Q(X_-)$ to a subcategory $\mathbf{G}\subset \Dstack{U_0}{T}$ and then restricting to $\big[X_+/Q\big]$. Consider the subcategory:
\[
(\pi_-)^\star \mathbf{G} \subset \Dstack{V_0}{T\times \Cstar}
\]
Since we have a commuting diagram
\[
\xymatrix{
  & \big[V_- / (T\times \Cstar) \big]  \ar@{^{(}->}[r] \ar@{=}[d]  &
  \big[V_{+|-} / (T\times \Cstar) \big]  \ar@{^{(}->}[r] \ar@{=}[dr]  &  \big[V_0 / (T\times \Cstar) \big] \ar[d]^{\pi_-} \\
  \big[X_-/Q\big] \ar@{=}[r] & \big[U_- / T \big] \ar@{^{(}->}[rr] &&  \big[U_0 / T \big] 
}
\]
the subcategory $(\pi_-)^\star \mathbf{G}$ must be equivalent to $D^b_Q(X_-)$ under restriction, and therefore we can also obtain the functor $\mathbb{GR}$ by inverting this equivalence and then restricting to $D^b_Q(X_+)$. 

Now take an object $\cE\in (\pi_-)^\star \mathbf{G}$. From the definition of $\mathbf{G}$, and the fact that 
\[
\CC^{M_{\geq 0}} \;\subset \;(\pi_-)^{-1}\left( \CC^{M_0}\right) 
\]
it follows that the homology sheaves of the restriction of $\cE$ to $\CC^{M_{\geq 0}}\cap V_0$ have $(e,1)$-weights lying in the interval $[0, d)$. Consequently, the restriction functor from $V_0$ to the open subset $V_{+|\sim}$ maps the subcategory $(\pi_-)^\star \mathbf{G}$ into the subcategory $\widetilde{\mathbf{F}}$. 

Also, the homology sheaves of the restriction of $\cE$ to $\CC^{M_{\leq 0}} \cap V_0$ have $(0,1)$-weight zero, since this is true of any object in the image of $(\pi_-)^\star$. Consequently the restriction functor from $V_0$ to $V_{-|\sim}$ maps $(\pi_-)^\star \mathbf{G}$ into $\mathbf{H}$. This must in fact be an equivalence, since both categories are equivalent to $\Dstack{X_-}{Q}$ under restriction to $V_-$.

Putting all of the above together, we have a commutative diagram
\[ \xymatrix{ & & (\pi_-)^\star\mathbf{G} \ar[dl]_{\simeq} \ar[dr] & & \\
& \mathbf{H} \ar[dl]_{\simeq} \ar[dr] & & \widetilde{\mathbf{F}} \ar[dl]_{\simeq} \ar[dr] & \\
D^b_Q(X_-) \ar[rr]^{(f_-)^*} & & D^b_Q(\tilde{X})\ar[rr]^{(f_+)_\star} & & D^b_Q(X_+)}
\]
in which all the downward arrows are restriction functors (cf.~the diagram \eqref{eq:poset}).  We conclude that the functor $\mathbb{GR}$ agrees with the composition $(f_+)_\star (f_-)^\star$, which is the statement of the Proposition.
\end{proof}

\section*{Acknowledgements}

H.I.~thanks Hiraku Nakajima for discussions on the equivariant index theorem.  This research was supported by a Royal Society University Research Fellowship; the Leverhulme Trust; ERC Starting Investigator Grant number~240123; EPSRC Mathematics Platform grant EP/I019111/1; JSPS Kakenhi Grant Number 16K05127, 16H06337, 25400069; NFGRF, University of Kansas; Simons Foundation Collaboration Grant 311837; and an Imperial College Junior Research Fellowship.

\bibliographystyle{plain}
\bibliography{bibliography}

\end{document}